\numberwithin{equation}{section}
\newtheorem{thm}{Theorem}[section]
\newtheorem{pro}[thm]{Proposition}
\newtheorem{cor}[thm]{Corollary}
\newtheorem{conj}[thm]{Conjecture}
\theoremstyle{remark}
\newtheorem{rem}[thm]{Remark}
\theoremstyle{definition}
\definecolor{cerulean}{rgb}{0,.48,.65} 
\definecolor{magenta}{rgb}{.5,0,.5} 
\definecolor{dred}{rgb}{.5,0,0} 
\definecolor{green}{rgb}{0,.5,0} 
\definecolor{blue}{rgb}{0,0,0.7} 
\definecolor{black}{rgb}{0,0,0} 
\definecolor{dgreen}{rgb}{0,.3,0} 
\definecolor{vdred}{rgb}{.3,0,0} 
\definecolor{red}{rgb}{1,0,0} 
\definecolor{salmon}{rgb}{0.98,0.50,0.45} 
\definecolor{gray}{rgb}{.5,.5,.5} 
\definecolor{seagreen}{rgb}{0.13,0.70,0.67} 
\definecolor{chartreuse}{rgb}{0.40,0.80,0.00}
\definecolor{cornflower}{rgb}{0.39,0.58,0.93} 
\definecolor{gold}{rgb}{0.80,0.68,0.00}
\DeclareMathOperator*{\Irr}{Irr}
\DeclareMathOperator*{\Gal}{Gal}
\DeclareMathOperator*{\Sp}{Sp}
\DeclareMathOperator*{\U}{U}
\DeclareMathOperator*{\SO}{SO}
\newcommand{\SL}{\mathrm{SL}} 	
\newcommand{\GL}{\mathrm{GL}} 
\newcommand{\GO}{\mathrm{GO}} 
\newcommand{\GSO}{\mathrm{GSO}} 	
\newcommand{\GSpin}{\mathrm{GSpin}} 	
\DeclareMathOperator*{\GSp}{GSp}
\newcommand{\ad}{\mathrm{ad}} 	
\newcommand{\Ad}{\mathrm{Ad}} 	
\DeclareMathOperator*{\der}{der}
\DeclareMathOperator*{\Res}{Res}
\DeclareMathOperator*{\Hom}{Hom}
\newcommand{\Sym}{\operatorname{Sym}}
\newcommand{\vp}{\varphi}
\newcommand{\tvp}{\widetilde{\varphi}}
\newcommand{\ttau}{\widetilde{\tau}}
\newcommand{\tchi}{\widetilde{\chi}}
\newcommand{\teta}{\widetilde{\eta}}
\newcommand{\St}{\mathsf{St}}
\newcommand{\tsigma}{\widetilde{\sigma}}
\newcommand{\ts}{\widetilde{\sigma}}
\newcommand{\s}{\cong}			 
\newcommand{\pp}{\mathcal{P}} 	
\newcommand{\OO}{\mathcal{O}}
\newcommand{\CC}{\mathbb{C}}
\newcommand{\QQ}{\mathbb{Q}}
\newcommand{\ZZ}{\mathbb{Z}}
\def\bM{\mathbf M}
\def\L{\mathcal L}
\newcommand{\tB}{\widetilde{B}}
\newcommand{\wG}{\widehat{G}}
\newcommand{\tG}{\widetilde{G}}
\newcommand{\tM}{\widetilde{M}}
\newcommand{\tbM}{\widetilde{\bold M}}
\newcommand{\tT}{\widetilde{T}}
\newcommand{\tU}{\widetilde{U}}
\title[Representations and Adjoint $L$-Function for $\GSpin_4$ and $\GSpin_6$]
{Representations of the $p$-adic $\GSpin_4$ and $\GSpin_6$ and the Adjoint $L$-Function}
\author[Mahdi Asgari and Kwangho Choiy]{Mahdi Asgari and Kwangho Choiy}
\address{Mahdi Asgari\\
Department of Mathematics,
Oklahoma State University,
Stillwater, OK 74078-1058,
U.S.A.}
\email{asgari@math.okstate.edu}
\address{Kwangho Choiy\\
School of Mathematical and Statistical Sciences,
Southern Illinois University,
Carbondale, IL 62901-4408,
U.S.A.}
\email{kchoiy@siu.edu}
\begin{document}

\begin{abstract} 
We prove a conjecture of B. Gross and D. Prasad about determination of generic $L$-packets in terms of the analytic properties of the adjoint $L$-function for $p$-adic general even spin groups of semi-simple ranks 2 and 3.  We also explicitly write the adjoint $L$-function for each $L$-packet in terms of the local Langlands $L$-functions for the general linear groups.  
\end{abstract}
\maketitle 

\section{Introduction} \label{intro}
In this article, we provide further details on the local $L$-packets for the non-Archimedean split general spin groups $\GSpin_4$ and $\GSpin_6$, 
following our earlier work \cite{acgspin}.  We then use our explicit description of these $L$-packets to prove a conjecture of B. Gross and D. Prasad \cite{gross22,grossprasad92}
determining which of the $L$-packets are ``generic'' (i.e., contain an irreducible representation with a Whittaker model) 
in terms of the analytic properties at $s=1$ of the adjoint $L$-function of the packet.  
We also write the adjoint $L$-function for each $L$-packet in terms of the local Langlands $L$-functions of the general linear groups. 
In addition to details about the representations that our results provide,  
given that the adjoint $L$-functions have a significant role in the Gan-Gross-Prasad conjectures, we expect that our results in this paper 
would be helpful in that direction as well. Particularly striking is the generalization of the Gan-Gross-Prasad to the non-tempered case \cite{ggp20} where 
the relevant adjoint $L$-function does have a pole at $s=1$.

Let $F$ be a $p$-adic field of characteristic zero.  Denote by $W_F$ the Weil group of $F$ and let $W'_F = W_F \times \SL_2(\CC)$ be the Weil-Deligne group of $F$. 
Let $G$ be a connected, reductive, linear algebraic group over $F$.  The local Langlands Conjecture (LLC) predicts a surjective, finite-to-one 
map $\L$ from the set $\operatorname{Irr}(G)$ of equivalence classes of irreducible, smooth, complex representations of $G(F)$ 
to the set $\Phi(G)$ of $\wG$-conjugacy classes of $L$-parameters of $G(F)$, i.e., admissible homomorphisms $\phi : W'_F \longrightarrow {}^LG$. 
Here,  ${}^LG$ denotes the $L$-group of $G$ with $\wG = {}^LG^0$ its connected component, i.e., the complex dual of $G$ \cite{bo79}. 
Among other properties, the map $\L$ is supposed to preserve the local $L$-, $\epsilon$-, and $\gamma$-factors. 
Moreover, the (finite) fibers $\Pi_\phi$, for $\phi \in \Phi(G)$, of the map $\L$ are called the $L$-packets of $G$ and 
their structures are expected to be controlled by certain finite subgroups of $\wG$.

Consider the split general spin groups $G=\GSpin_4$ and $G=\GSpin_6$, of type $D_2 = A_1 \times A_2$ and $D_3 = A_3$ respectively, 
whose algebraic structure we review in Section \ref{sec:gspin46-structure}.  We constructed most of the $L$-packets for these two groups in \cite{acgspin} 
and proved that they satisfy the expected properties of preservation of the local factors and their internal structure.  We review and complete 
the construction of these $L$-packets. In particular, using the classification of representations of $GL_n,$ we give more explicit descriptions 
of the $L$-packets for $\GSpin_4$ and $\GSpin_6$ in terms of given representations of $\GL_2 \times \GL_2$ and $\GL_4 \times GL_1,$ respectively. 
As a byproduct, we are able to give the criteria for determining the size of the $L$-packets for $\GSpin_4$ and $\GSpin_6$ 
(see Sections \ref{genericclassification4} and \ref{genericclassification6}).

The known cases of the LLC for the $p$-adic groups include $\GL_n$\cite{ht01, he00, scholze13};
$\SL_n$ \cite{gk82}; 
 non-quasi-split $F$-inner forms of $\GL_n$ and $\SL_n$ \cite{hs11, abps13};   
 $\GSp_4$ and $\Sp_4$ \cite{gt, gtsp10};
 non-quasi-split $F$-inner form $\GSp_{1,1}$ of $\GSp_4$ \cite{gtan12};
 $\Sp_{2n}, \SO_{n},$ and quasi-split $\SO^*_{2n}$ \cite{art12};
 $\U_n$  \cite{rog90, mok13};
non quasi-split $F$-inner forms of $\U_n$ \cite{rog90, kmsw14};
non-quasi-split $F$-inner form $\Sp_{1,1}$ of $\Sp_4$ \cite{ch15};
$\GSpin_{4}, \GSpin_{6}$ and their inner forms \cite{acgspin};
$\GSp_{2n}$ and $\GO_{2n}$ \cite{xumathann}.

Going back to the case of general $G$, assume that $\rho$ is a finite-dimensional complex representation of ${}^LG$.  When LLC is known, 
one can define the local Langlands $L$-functions 
\[ 
L(s, \pi, \rho) = L(s, \rho \circ \phi) 
\] 
for each $\pi \in \Pi_\phi$. 
Here, the $L$-factors on the right hand side are the Artin local factors associated to the given representation of $W'_F$.

B. Gross and D. Prasad conjectured (in the generality of quasi-split groups) that the local 
$L$-packet $\Pi_\phi(G)$ is generic if and only if the adjoint $L$-function $L(s, \Ad \circ \phi)$ is regular at $s=1$ 
\cite[Conj. 2.6]{grossprasad92}.  
Here, $\Ad$ denotes the adjoint representation of ${}^LG$ on the dual Lie algebra $\widehat{\mathfrak g}$ of $\wG$. 
(Note that in the body of this paper we use $\Ad$ exclusively for the restriction of the adjoint representation to the 
derived group of $\widehat{\mathfrak g}$ to distinguish it from the full adjoint $L$-function, which would have an 
extra factor of the $L$-function for the trivial character when $\widehat{\mathfrak g}$ has a one-dimensional center.)

We prove the above conjecture for the groups $\GSpin_4$ and $\GSpin_6$ as a consequence of our construction of 
the $L$-packets for these groups.  In fact, we prove the conjecture for a larger class of groups $G = G_{m,n}^{r,s}$, 
which are given as subgroups of $\GL_m \times \GL_n$ satisfying a certain determinant equality \eqref{Gmn}.  
We are able to work in the slightly larger generality because, as in the construction of the $L$-packets, we use the 
approach of restricting representations from $\GL_m(F) \times \GL_n(F)$ to the subgroup $G$.

Moreover, we also give the adjoint $L$-function in all cases explicitly in terms of local Langlands $L$-functions 
of the general linear groups.  While we are able to prove the Gross-Prasad conjecture already without the explicit 
knowledge of the adjoint $L$-function, the explicit description of the adjoint $L$-function certainly also verifies the conjecture 
and we include it here since it may lead to other number theoretic or representation theoretic results.

Finally, we take this opportunity to correct a few inaccuracies in \cite{acgspin}. They do not affect the main 
results in that paper and fix some errors in our description of the $L$-packets. The details are given in 
Section \ref{sec:corrections}.

\subsection*{Acknowledgements}
We are grateful to Behrang Noohi and 
Ralf Schmidt for helpful discussions. 
We also thank B. Gross for his interest in this paper and clarifying the history of his conjecture and the context in 
which it was made. 

K. Choiy was supported by a gift from the Simons Foundation (\#840755). 

\section{Preliminaries} 

\subsection{Local Langlands Correspondence (LLC)} 
Let $p$ be a prime number and let $F$ be a $p$-adic field of characteristic zero, 
i.e., a finite extension of $\QQ_p$. We fix an algebraic closure $\bar{F}$ of $F.$ 
Denote the 
ring of integers of $F$ by $\OO_F$ and its unique maximal ideal by $\pp_F$. 
Moreover, let $q$ denote the cardinality of the residue field $\OO_F / \pp_F$ 
and fix a uniformizer $\varpi$ with $| \varpi |_F = q^{-1}$. Also, let $W_F$ denote 
the Weil group of $F$, $W'_F$ the Weil-Deligne group of $F$, and 
$\Gamma$ the absolute Galois group $\Gal(\bar{F} / F)$.  
Throughout the paper, we will use the notation $\nu(\cdot) = |\cdot|_F$.

Let $G$ be a connected, reductive, linear algebraic group over $F$.  
Fixing $\Gamma$-invariant splitting data we define the $L$-group of $G$ as 
a semi-direct product ${}^LG := \widehat{G} \rtimes \Gamma$, 
where $\widehat{G} = {}^LG^0$ denotes the connected component 
of the $L$-group of $G,$ i.e., the complex dual of $G$ (see \cite[\S 2]{bo79}).

LLC (still conjectural in this generality) asserts that 
there is a surjective, finite-to-one map from the set $\Irr(G)$ of isomorphism classes of 
irreducible smooth complex representations of $G(F)$ to the set $\Phi(G)$ of 
$\widehat{G}$-conjugacy classes 
of $L$-parameters, i.e., admissible homomorphisms 
$\vp: W'_F \longrightarrow {}^LG$.  

Given $\vp \in \Phi(G),$ its fiber $\Pi_{\vp}(G)$, which is called an $L$-packet for $G,$ 
is expected to be controlled by a certain finite group living in the complex dual group $\widehat{G}.$
Furthermore, for $\pi \in \Pi_{\vp}(G)$ and $\rho$ a finite dimensional algebraic representation 
of ${}^LG$ one defines the local factors 
\begin{eqnarray} 
L(s, \pi, \rho) &=& L(s, \rho \circ \phi), \\
\epsilon(s,\pi, \rho,\psi) &=& \epsilon(s, \rho \circ \phi, \psi), \\
\gamma(s,\pi, \rho,\psi) &=& \gamma(s, \rho \circ \phi, \psi). 
\end{eqnarray} 
provided that LLC is known for the case in question. Here, the factors on the right are Artin factors.

\subsection{The Adjoint $L$-Function} 
What we recall in this subsection holds for $G$ quasi-split (\cite[\S 2]{grossprasad92}). However, for simplicity 
we will take $G$ to be split over $F$ since the groups we are working with in this article are split. 
When $G$ is split over $F$, we may replace the $L$-group ${}^LG$ by its 
connected component $\wG = {}^LG^0$. Take $\rho$ to be the adjoint action of $\wG$ on its Lie algebra. 
Then we obtain the adjoint $L$-function $L(s, \pi, \Ad_{\wG}) = L(s, \Ad_{\wG} \circ \phi)$ for 
all $\pi \in \Pi_{\vp}(G)$. 
The following is a conjecture of D. Gross and D. Prasad (see \cite[Conj. 2.6]{grossprasad92}).  

\begin{conj} \label{GP-R conj}
$\Pi_{\vp}(G)$ contains a generic member if and only if $L(s, \Ad_{\wG} \circ \phi)$ is regular at $s=1$. 
(Equivalently, $\pi$ is generic if and only if $L(s,\pi,\Ad_{\wG})$ is regular at $s=1$.) 
\end{conj}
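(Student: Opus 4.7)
The plan is to reduce the conjecture for $G = G_{m,n}^{r,s}$ to the analogous statement for the general linear groups $\GL_m$ and $\GL_n$, exploiting the fact that $G$ is cut out of $\GL_m \times \GL_n$ by a determinant-type condition \eqref{Gmn}. By the construction of $L$-packets carried out in \cite{acgspin} and revisited in Sections \ref{genericclassification4} and \ref{genericclassification6}, every $\Pi_{\vp}(G)$ arises as the set of irreducible constituents of the restriction to $G(F)$ of a representation $\tau_1 \boxtimes \tau_2$ of $\GL_m(F) \times \GL_n(F)$, where each $\tau_i$ is determined, via the LLC for $\GL$, by a lift $\tvp = \tvp_1 \boxtimes \tvp_2$ of $\vp$.

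First I would decompose the adjoint $L$-function explicitly. Since $\wG$ is a central quotient of $\GL_m(\CC) \times \GL_n(\CC)$, its derived Lie algebra is $\mathfrak{sl}_m(\CC) \oplus \mathfrak{sl}_n(\CC)$, and the standard decomposition $\mathfrak{gl}_k \simeq \mathfrak{sl}_k \oplus \mathbf{1}$ of $\GL_k$-modules identifies $\std \otimes \std^\vee$ with $\Ad_{\mathfrak{sl}_k} \oplus \mathbf{1}$. This yields
\[
L(s, \Ad \circ \vp) \;=\; \frac{L(s, \tau_1 \times \tau_1^\vee)\, L(s, \tau_2 \times \tau_2^\vee)}{L(s, \mathbf{1}_F)^{2}}.
\]
Because $L(s, \mathbf{1}_F) = (1 - q^{-s})^{-1}$ is regular and nonzero at $s=1$, the adjoint $L$-function is regular at $s=1$ if and only if both Rankin--Selberg $L$-functions $L(s, \tau_i \times \tau_i^\vee)$ are regular at $s=1$.

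Next I would invoke the $\GL_k$ analogue of the conjecture: for an irreducible admissible $\tau$ of $\GL_k(F)$ with parameter $\tvp$, the $L$-function $L(s, \tau \times \tau^\vee)$ is regular at $s=1$ if and only if $\tau$ is generic. This is a classical consequence of the Langlands--Zelevinsky classification; writing $\tvp = \bigoplus_i \delta_i \nu^{s_i}$ as a sum of twisted discrete-series parameters, a pole of $L(s, \tvp \otimes \tvp^\vee)$ at $s=1$ corresponds to the existence of indices with $\delta_i \cong \delta_j$ and $s_i - s_j = 1$, which is exactly the linking condition obstructing irreducibility of the standard module and therefore obstructing genericity of $\tau$.

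The last step is to translate genericity of $\tau_1 \boxtimes \tau_2$ into the existence of a generic member of $\Pi_{\vp}(G)$. Since the quotient $(\GL_m \times \GL_n)(F) / G(F)$ is abelian and a Whittaker datum on $G(F)$ extends compatibly to one on $\GL_m(F) \times \GL_n(F)$, an irreducible constituent of $(\tau_1 \boxtimes \tau_2)|_{G(F)}$ admits a Whittaker model for the chosen datum if and only if $\tau_1 \boxtimes \tau_2$ itself admits one, which holds if and only if each $\tau_i$ is generic. Chaining the three equivalences establishes the conjecture. The main obstacle is this last step: one must compare Whittaker functionals on $G(F)$ and on the ambient product with care, guaranteeing compatibility of Whittaker data under restriction and that no generic constituent is lost or gained, and one must pin down the central isogeny describing $\wG$ inside $\GL_m(\CC) \times \GL_n(\CC)$ precisely enough that the decomposition of $\Ad$ above is justified uniformly across the family $G_{m,n}^{r,s}$.
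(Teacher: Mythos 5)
Your argument is essentially the paper's own proof: factor the adjoint $L$-function of $\pi$ through the two $\GL$ factors (the paper's identity \eqref{Ad-L2}, which is your Rankin--Selberg formula after cancelling the $L(s,1_{F^\times})$ factors), invoke the known $\GL_k$ case of the conjecture, and use the compatibility of genericity under restriction from $\GL_m(F)\times\GL_n(F)$ to $G(F)$ (the paper's Section \ref{restrictionofgeneric}, resting on Tadi\'c's result). The step you flag as the main obstacle is exactly what that cited restriction result supplies, so the proposal is correct and takes the same approach.
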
 

The conjecture is known in many cases in which the LLC is known. To mention a few, it was verified for $\GL_n$ 
by B. Gross and D. Prasad \cite{grossprasad92}, for $\GSp_4$ in \cite{gt} and, for 
non-supercuspidals, in \cite{mahdi-ralf}, and for $\SO$ and $\Sp$ groups, it follows from the work 
of Arthur on endoscopic classification \cite{art12}. 
We will verify this conjecture for the small rank split groups $\GSpin_4$ and $\GSpin_6$.

\subsection{The Groups $\GSpin_4$ and $\GSpin_6$} \label{sec:gspin46-structure} 
We gave detailed information about the structure of these two groups (as well as their inner forms) 
in \cite[\S 2.2]{acgspin}. 
For now we just recall the incidental isomorphisms 
\begin{eqnarray} 
{\GSpin}_4  &\cong& \left\{ (g_1, g_2) \in {\GL}_2 \times {\GL}_2 : \det g_1 = \det g_2 \right\} \\
{\GSpin}_6  &\cong& \left\{ (g_1, g_2) \in {\GL}_1 \times {\GL}_4 : g_1^2 = \det g_2 \right\}.
\end{eqnarray}

While our main interests in this article are the split general spin groups $\GSpin_4$ 
and $\GSpin_6$, for the purposes of 
Conjecture \ref{GP-R conj} it is no more difficult, and perhaps also more natural, 
to consider a slightly more general setup as follows.

Fix integers $m,n \ge 1$ and $r,s \ge 1$ and assume that $\operatorname{gcd}(r,s)=1$.  
Define 
\begin{equation} \label{Gmn} 
G = G_{m,n}^{r,s} := \left\{ (g,h) \in \GL_m \times \GL_n \mid 
(\det g)^r = (\det h)^s \right\} 
\end{equation}

\begin{pro}
The group $G_{m,n}^{r,s}$ is a split, connected, reductive, linear algebraic group over $F$.  
\end{pro}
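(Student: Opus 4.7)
The plan is to realize $G = G_{m,n}^{r,s}$ as the kernel of a single character of $\GL_m \times \GL_n$ and then read off each of the four required properties from general structure theory. Specifically, define
\[
\chi : \GL_m \times \GL_n \longrightarrow \mathbb{G}_m, \qquad (g,h) \longmapsto (\det g)^r (\det h)^{-s}.
\]
Then $G = \ker \chi$ is a closed subgroup of $\GL_m \times \GL_n$, so it is automatically a linear algebraic group over $F$.

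For connectedness, I would factor $\chi$ as
\[
\GL_m \times \GL_n \xrightarrow{\det \times \det} \mathbb{G}_m \times \mathbb{G}_m \xrightarrow{(x,y)\mapsto x^r y^{-s}} \mathbb{G}_m,
\]
so that $G = (\det\times\det)^{-1}(K)$ where $K = \{(x,y)\in\mathbb{G}_m^2 : x^r = y^s\}$. Here is the one place where the coprimality hypothesis is essential: writing $1 = ar + bs$ with $a,b\in\ZZ$, one checks that $t\mapsto (t^s,t^r)$ is an isomorphism $\mathbb{G}_m \xrightarrow{\sim} K$ with inverse $(x,y)\mapsto x^b y^a$. (Without $\gcd(r,s)=1$, $K$ would be disconnected.) Since $\det\times\det$ is surjective with connected kernel $\SL_m\times \SL_n$, we get a short exact sequence
\[
1 \longrightarrow \SL_m\times\SL_n \longrightarrow G \longrightarrow \mathbb{G}_m \longrightarrow 1,
\]
with connected end terms, so $G$ is connected.

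For reductivity, the unipotent radical $R_u(G)$ is a connected normal unipotent subgroup of $G$, and since $G$ is normal in $\GL_m\times \GL_n$ (being the kernel of a character), $R_u(G)$ is characteristic in $G$ and hence normal in the ambient reductive group $\GL_m\times \GL_n$; it is therefore trivial. For splitness, I would exhibit
\[
T = \left\{\bigl(\diag(a_1,\ldots,a_m),\,\diag(b_1,\ldots,b_n)\bigr) : \Bigl(\prod_i a_i\Bigr)^r = \Bigl(\prod_j b_j\Bigr)^s\right\} \subset G
\]
as a maximal split torus. It is a closed subgroup of the standard split diagonal torus of $\GL_m \times \GL_n$ cut out by a single multiplicative equation; the same coprimality argument used for $K$ shows $T$ is connected, hence a subtorus of a split torus and therefore split over $F$. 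Its dimension is $m+n-1$, which matches the rank of $G$ since $G$ is the kernel of a nontrivial character of an $(m+n)$-dimensional maximal torus of $\GL_m\times\GL_n$, so $T$ is a split maximal torus of $G$.

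The only real subtlety is the use of $\gcd(r,s) = 1$ to force connectedness of the fiber $K$; once that is handled via the explicit isomorphism $K \cong \mathbb{G}_m$, everything else reduces to standard properties of kernels of characters of connected reductive groups.
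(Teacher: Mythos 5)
Your proof is correct, and it takes a genuinely different route from the paper's. The paper treats connectedness as an irreducibility question about the polynomial $(\det X)^r-(\det Y)^s$ in the matrix entries, and proves irreducibility by a general scheme-theoretic lemma: it fibers the locus $f(p,q)=0$ over the irreducible curve $x^r=y^s$ and argues that a flat (hence open) morphism with irreducible fibers over an irreducible base has irreducible total space. You instead stay entirely inside group theory: $G$ is the kernel of the character $(g,h)\mapsto(\det g)^r(\det h)^{-s}$, and sits in an extension of $K\cong\mathbb{G}_m$ by $\SL_m\times\SL_n$, so connectedness is immediate once B\'ezout's identity $1=ar+bs$ trivializes $K=\{x^r=y^s\}\subset\mathbb{G}_m^2$. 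Your approach is more elementary, makes the role of $\gcd(r,s)=1$ completely transparent (it is exactly the condition for $K$, and for the diagonal torus $T$, to be connected), and — unlike the paper's proof — explicitly addresses splitness by exhibiting the split maximal torus $T$ of dimension $m+n-1$; your normality argument for killing $R_u(G)$ is also a clean replacement for the paper's one-line "almost direct product" remark. What the paper's route buys in exchange is a more general irreducibility criterion for polynomials of the form $f(p,q)$, stated and proved as an independent assertion. One small phrasing point: in your maximality argument for $T$, what you mean is that $\chi$ restricts to a nontrivial character of the diagonal maximal torus $S$ of $\GL_m\times\GL_n$, so $\ker(\chi|_S)$ has dimension $m+n-1$, and every maximal torus of $G$ lies in a conjugate of $S\cap G$ because $G$ is normal; spelled out this way the dimension count is airtight.
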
 

\begin{proof} 
Let $X = (X_{ij})$ and $Y = (Y_{kl})$ be $m \times m$ and $n \times n$ matrices, respectively. 
It is clear that $G_{m,n}^{r,s}$, being an almost direct product of $SL_m \times \SL_n$ and a 
torus, is reductive. 
The only issue that requires justification is that the polynomial 
$ f(X,Y) = (\det X)^r - (\det Y)^s$ is irreducible in $F[X_{ij}, Y_{kl}]$ if and only if $d = \gcd(r,s) = 1$.  
It is clear that if $d > 1$, then $f$ is reducible since it would be divisible by $(\det X)^{(r/d)} - (\det Y)^{(s/d)}$.  
It remains to show that if $d=1$, then $f(X,Y)$ is irreducible.  This assertion should be easy to see 
via elementary arguments considering the polynomials in a possible factorization of $f$.  However, we 
prove it below as a special case of a more general fact.

Assume that $f(x,y)$ is an (arbitrary) irreducible polynomial in $F[x,y]$. Let 
\[ p(x_1,x_2,\dots,x_a) \in F[x_1,x_2,\dots,x_a] \quad \mbox{ and } \quad 
p(y_1,y_2,\dots,y_b) \in F[y_1,y_2,\dots,y_b] \] 
be two polynomials such that 
$p-\alpha$ and $q-\alpha$ are irreducible for all constants $\alpha$. 
Then, $f(p,q)$ is irreducible in $F[x_1,x_2,\dots,x_a,y_1,y_2,\dots,y_b]$.

Our Proposition would clearly follow from the above assertion since $(\det - \alpha)$ is always 
an irreducible polynomial and it is well-known that the two-variable polynomial $x^r - y^s$ is irreducible 
in $F[x,y]$ provided that $d = \gcd(r,s) = 1$.

To prove the assertion above, we proceed as follows. By base extension to an algebraic closure 
we may assume, without loss of generality, that $F$ is algebraically closed.

Let $A$ be the subscheme of $\operatorname{Spec} F[x_1,x_2,\dots,x_a,y_1,y_2,\dots,y_b]$ 
defined by $f(p,q)$, and 
let $B$ be the subscheme of $\operatorname{Spec} F[x,y]$ defined by $x^r-y^s$.  
The latter is irreducible since $x^r-y^s$ is an irreducible polynomial by our assumption that $d=1$.  
There is a natural map $A \to B$ which has irreducible (geometric) fibers. 
The result now follows from the following claim.

Claim: Let $g : A \to B$ be an open morphism of schemes of finite type over 
an algebraically closed field $F$ such that the (geometric) fibers of $g$ are irreducible 
and $B$ is irreducible. Then $A$ is irreducible.

To see the claim let $U$ be an open in $A$.  We want to show that for any other open $V$, 
we have that $U \cap V$ is nonempty. Since $B$ is irreducible and $g$ is open, we have that 
$g(U) \cap g(V)$ is nonempty so there is a fiber $F_0$ of $g$ such that 
$F_0 \cap U$ and $F_0 \cap V$ are nonempty.  Hence, by irreducibility of $F_0$, 
they have a nonempty intersection in $F_0$.  In particular, $U \cap V$ is nonempty, 
which gives the claim.

It only remains to check that the map $A \to B$ above is open. 
In fact, it is flat since it is a base extension of the cartesian product of two flat morphisms
$p : \operatorname{Spec} F[x_1,...,x_a] \to \operatorname{Spec} F[x]$ and 
$q : \operatorname{Spec} F[y_1,...,y_b] \to \operatorname{Spec} F[y]$.  
(Here, we are using the fact that $\operatorname{Spec} F[x]$ is a curve.) This finishes the proof. 
\end{proof}

Of particular interest to us in this paper are the cases 
\begin{itemize} 
\item $m=n=2$ and $r=s=1$, when $G = \GSpin_4$, and 
\item $m=1$, $n=4$ and $r=2$, $s=1$, when $G = \GSpin_6$. 
\end{itemize}

The (connected) $L$-group of $G$ is 
\begin{equation} \label{L-gp for Gmn}
{{}^L {G}_{m,n}^{{r,s} \,0}} = \widehat{G} \s  ({\GL}_m(\CC) \times {\GL}_n(\CC))  / \{ (z^{-r}I_m, z^sI_n) : z \in \CC^{\times}   \} 
\end{equation}
and we have the exact sequence 
\begin{equation} \label{ex seq L-gp for Gmn}
1 \longrightarrow  \{ (z^{-r}I_m, z^sI_n) : z \in \CC^{\times}   \} \s \CC^{\times} 
\longrightarrow {\GL}_m(\CC) \times {\GL}_n(\CC) 
\xrightarrow{pr_{m,n}^{r,s}}
\widehat{G_{m,n}^{r,s}} \longrightarrow 1. 
\end{equation}

\subsection{Computation of the Adjoint $L$-Function for $G$}

Let $\pi$ be an irreducible admissible representation of $G(F)$.  There 
exist irreducible admissible representations $\pi_m$ and $\pi_n$ of $\GL_m(F)$ and 
$\GL_n(F)$, respectively, such that 
\begin{equation} \label{pi-mn} 
\pi \hookrightarrow {\Res}_{G(F)}^{\GL_m(F) \times \GL_n(F)} \left( \pi_m \otimes \pi_n \right). 
\end{equation}  
Let $\Ad_{\wG}$ denote the adjoint action of $\wG$ on its Lie algebra 
\begin{equation}  \label{gmn}
\widehat{\mathfrak g} = \left\{ (X,Y) \in \mathfrak{gl}_m(\CC) \times \mathfrak{gl}_n(\CC) \mid 
r \operatorname{tr}(X) = s \operatorname{tr}(Y) \right\}. 
\end{equation}  
In what follows, let us write 
\begin{equation}
\Ad_{\wG} = \operatorname{triv} \oplus \Ad 
\end{equation} 
and for $i \in \{m,n\}$ we similarly write 
$\Ad_i = \Ad_{\widehat{GL}_i} = \operatorname{triv} \oplus \Ad,$
where $\Ad$ here denotes the action of $\GL_i(\CC)$ on the space 
of traceless $i \times i$ complex matrices ${\mathfrak sl}_i(\CC)$.

Let $\phi_\pi : W_F \times \SL_2(\CC) \to \wG$ be the $L$-parameter of $\pi$ and let 
$\phi_i  : W_F \times \SL_2(\CC) \to \GL_i(\CC)$, $i=m,n$, be the $L$-parameter of $\pi_i$. 
Recall by \eqref{ex seq L-gp for Gmn} that we have a natural map 
\begin{equation}  
pr = {pr_{m,n}^{r,s}} : \GL_m(\CC) \times \GL_n(\CC) \longrightarrow \wG.  
\end{equation}  
Then we have 
\begin{equation}  
\phi_\pi = pr \circ (\phi_m \otimes \phi_n). 
\end{equation}  
Since the subgroup 
$\{ (z^{-r}I_m, z^sI_n) : z \in \CC^{\times}   \}$ 
is central in $\GL_m(\CC) \times \GL_n(\CC)$ 
the following diagram commutes. 
 
\[ \begin{tikzcd}[row sep=huge, column sep=huge] 
& \GL_m(\CC) \times \GL_n(\CC) \arrow[r, "\Ad_m \otimes \Ad_n"] \arrow[dd, "pr"]
& \operatorname{Aut}_\CC\left( \mathfrak{gl}_m(\CC) \times \mathfrak{gl}_n(\CC) \right) 
\arrow[dd] \\ 
W_F \times \SL_2(\CC) \arrow[ur, "\phi_m \otimes \phi_n"] \arrow[dr, "\phi_\pi"] 
& &  \\ 
& \wG \arrow[r, "\Ad_{\wG}"] 
& \operatorname{Aut}_\CC\left(\widehat{\mathfrak g} \right)
\end{tikzcd} \] 

Note that the adjoint action $\Ad_m$ of $\GL_m(\CC)$ on 
${\mathfrak gl}_m(\CC)$ 
preserves the trace, and similarly for $n$, so we obtain a right downward arrow 
by simply restricting any automorphism to the set of those pairs satisfying the trace 
equality in \eqref{gmn}. 
We have  
\begin{eqnarray}  \label{Ad-L}
\nonumber L(s, 1_{F^\times}) L(s, \pi, \Ad) \cdot L(s, 1_{F^\times}) &=& L(s, \pi, \Ad_{\wG}) \cdot L(s, 1_{F^\times}) \\
\nonumber &=& L(s, \Ad_{\wG} \circ \phi_\pi) \cdot L(s, 1_{F^\times}) \\
\nonumber &=& L\left(s, (\Ad_m \otimes \Ad_n) \circ (\phi_m \otimes \phi_n) \right) \\
\nonumber &=& L(s, \Ad_m \circ \phi_m) L(s, \Ad_n \circ \phi_n) \\
\nonumber &=& L(s, \pi_m, \Ad_m) L(s, \pi_n, \Ad_n) \\ 
&=& L(s, 1_{F^\times})^2 L(s, \pi_m, \Ad) L(s, \pi_n, \Ad). 
\end{eqnarray}  
Therefore, we obtain the more convenient equality 
\begin{equation}\label{Ad-L2} 
L(s, \pi, \Ad) = L(s, \pi_m, \Ad) L(s, \pi_n, \Ad), 
\end{equation} 
which holds thanks to our choice of the notation $\Ad$. 
In Section \ref{pf-conj} this relation helps verify Conjecture \ref{GP-R conj} for the groups 
of interest to us.

\section{Genericity and The Conjecture of B. Gross and D. Prasad} \label{conj-sec}

\subsection{Restriction of Generic Representations} \label{restrictionofgeneric}
Let us write $\square^D$ for the group $\Hom(\square, \CC^{\times})$ of all continuous characters on 
a topological group $\square$. Dente by $\square_{\der}$ the derived group of $\square.$
Let $G$ and $\tG$ be connected, reductive, linear, algebraic groups over $F$ satisfying the property that
\begin{equation} \label{cond on G}
G_{\der} = \tG_{\der} \subseteq G \subseteq \tG.
\end{equation}
For any connected, reductive, linear, algebraic group $\square$ over $F,$ we write $\Irr_{\rm sc}(\square)$ 
and $\Irr_{\rm esq}(\square)$ for the set of equivalence classes of supercuspidal and essentially square-integrable 
representations of $\square(F),$ respectively.

Assume $\tG$ and $G$ to be $F$-split.
Let $\tB$ be a Borel subgroup of $\tG$ with Levi decomposition $\tB=\tT \tU.$ Then $B = \tB \cap G$ is a 
Borel subgroup of $G$ with $B=T U$.  Note that $T = \tT \cap G$ and $\tU=U.$
Let $\psi$ be a generic character of $U(F)$.  From \cite[Proposition 2.8]{tad92} we know that 
given a $\psi$-generic irreducible representation $\ts$ of $\tG(F)$ 
we have a unique $\psi$-generic $\sigma$ of $G(F)$ such that 
\[
\sigma \hookrightarrow {\Res}^{\tG}_{G} (\ts).
\] 
The generic character associated with $\sigma$ is not unique though.

\begin{pro} \label{conjugation}
Each generic character associated with $\sigma$ is determined up to the action of $\tT(F)/T(F).$ 
\end{pro}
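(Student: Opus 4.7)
The plan is to pair two natural actions of $\tT(F)$: its conjugation action on $U(F)=\tU(F)$, which induces an action on characters via $\psi\mapsto\psi^{t}$ with $\psi^{t}(u):=\psi(t^{-1}ut)$; and its action on $V_{\ts}$ through the operator $\ts(t)$, which, because $G$ is normal in $\tG$, permutes the irreducible $G(F)$-subrepresentations of $\Res^{\tG}_{G}\ts$. The key compatibility, which I would verify first by a direct computation on Whittaker functionals, is that if $\sigma=\sigma_{\psi}$ denotes the unique $\psi$-generic constituent provided by \cite[Prop.\ 2.8]{tad92}, then $\ts(t)V_{\sigma}$ is a $G(F)$-subrepresentation of $\Res^{\tG}_{G}\ts$ isomorphic to the conjugate $\sigma^{t}$ (defined by $\sigma^{t}(g):=\sigma(t^{-1}gt)$), and that this constituent is precisely $\sigma_{\psi^{t}}$.

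Now suppose that $\sigma$ is both $\psi_{1}$-generic and $\psi_{2}$-generic. Using the embedding $\sigma\hookrightarrow\ts|_{G(F)}$ together with the semisimplicity of the restriction, I would extend each Whittaker functional on $\sigma$ to one on $\ts$, so that $\ts$ itself is both $\psi_{1}$- and $\psi_{2}$-generic. Since $\tG$ is split and (in the cases of interest) has connected center, the natural map $\tT(F)\to\tT_{\mathrm{ad}}(F)$ is surjective and the generic characters of $\tU(F)$ form a single $\tT(F)$-orbit; hence there exists $t\in\tT(F)$ with $\psi_{2}=\psi_{1}^{t}$. By the compatibility above, $V_{\sigma}=\sigma_{\psi_{2}}=\sigma_{\psi_{1}^{t}}=\ts(t)V_{\sigma}$, placing $t$ in the $\tT(F)$-stabilizer of the isomorphism class of $\sigma$.

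To close, note that $T(F)\subseteq G(F)$ acts on $G(F)$ by inner automorphisms and hence preserves every $G(F)$-isomorphism class; replacing $t$ by $st$ for $s\in T(F)$ therefore yields another element realizing a character in the $T(F)$-orbit of $\psi_{2}$. Consequently $t$, and hence $\psi_{2}$, is determined by $\psi_{1}$ only up to the action of $\tT(F)/T(F)$, which is exactly the claim. The main obstacle is the intermediate step that $\ts$ being generic for both $\psi_{1}$ and $\psi_{2}$ forces $\psi_{2}\in\tT(F)\cdot\psi_{1}$; this is immediate for the cases $\tG=\GL_{m}\times\GL_{n}$ of central interest here, where all generic characters of $\tU(F)$ already lie in a single $\tT(F)$-orbit, but in wider generality one must invoke a Rodier-type statement on the $\tT_{\mathrm{ad}}(F)$-equivariance of the space of Whittaker functionals.
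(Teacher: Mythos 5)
Your proposal is correct and rests on the same mechanism as the paper's proof: conjugation by $t\in\tT(F)/T(F)\cong\tG(F)/G(F)$ permutes the irreducible constituents of ${\Res}^{\tG}_{G}(\ts)$ and carries the unique $\psi$-generic constituent $\sigma_\psi$ (Tadi\'c) to the unique ${}^t\psi$-generic one. The only difference is one of direction: the paper exhibits, for each constituent $\sigma={}^t\sigma_\psi$, the generic character ${}^t\psi$ in the $\tT(F)$-orbit, while you argue the stated uniqueness directly by showing that two generic characters of the same $\sigma$ are related by an element of $\tT(F)$ stabilizing $\sigma$, using the additional (correct) observation that all generic characters of $\tU(F)$ lie in a single $\tT(F)$-orbit for $\tG=\GL_m\times\GL_n$.
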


\begin{proof}
We let $\ts \in \Irr(\tG)$ be $\psi$-generic. Then there is a unique $\psi$-generic $\sigma_\psi \in \Pi_{\ts}(G)$.  
On the other hand, for each $\sigma \in \Pi_{\ts}(G)$ there exists $t \in \tT(F)/T(F) \s \tG/G(F)$ such that 
$\sigma={^t}\sigma_\psi,$ where $^t\sigma_\psi(g)=\sigma(t^{-1}gt).$ This implies that $\sigma$ is $^t\psi$-generic. 
Here $^t\psi$ is defined as $^t\psi(u)=\psi(t^{-1}ut).$
\end{proof}

\begin{rem} 
We say $\sigma \in \Irr(G)$, resp. $\tsigma \in \Irr(\tG)$, is generic if it is $\psi$-generic with respect to some generic character $\psi$. 
With this notation, $\sigma \in \Irr(G)$ is generic if and only if is $\tsigma \in \Irr(\tG).$
\end{rem}

\subsection{
Criterion for Genericity
} \label{pf-conj}
In this section we verify Conjecture \ref{GP-R conj} for the small rank general spin groups we are considering in this article.

\begin{thm} 
Let $G = G_{m,n}^{r,s}$ be the group defined in \eqref{Gmn}.  Let $\pi$ be an irreducible admissible representation of $G(F)$.  
Then $\pi$ is generic if and only if $L(s, \pi, \Ad)$ is regular at $s=1$. 
\end{thm}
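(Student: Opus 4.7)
The plan is to reduce the statement to the already-known case of general linear groups, using the restriction of representations from $\tG := \GL_m \times \GL_n$ to $G = G_{m,n}^{r,s}$ together with the factorization of the adjoint $L$-function established in \eqref{Ad-L2}. By \eqref{pi-mn}, I would fix irreducible admissible representations $\pi_m$ of $\GL_m(F)$ and $\pi_n$ of $\GL_n(F)$ such that
\[
\pi \hookrightarrow \Res_{G(F)}^{\GL_m(F)\times\GL_n(F)}\!\bigl(\pi_m \otimes \pi_n\bigr),
\]
and then assemble three ingredients: (i) the identity $L(s,\pi,\Ad) = L(s,\pi_m,\Ad)\,L(s,\pi_n,\Ad)$ from \eqref{Ad-L2}; (ii) the equivalence between genericity of $\pi$ and genericity of $\pi_m \otimes \pi_n$ provided by Proposition~\ref{conjugation} and the Remark that follows it, applied to the inclusion $G \subseteq \tG$ (which satisfies \eqref{cond on G} since $(\GL_m \times \GL_n)_{\der} = \SL_m \times \SL_n = G_{\der}$); and (iii) the known case of Conjecture~\ref{GP-R conj} for $\GL_i$, due to Gross and Prasad \cite{grossprasad92}.

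For the forward direction, if $\pi$ is generic, then (ii) forces $\pi_m \otimes \pi_n$, and hence both $\pi_m$ and $\pi_n$, to be generic. Applying (iii) to each factor shows that $L(s,\pi_m,\Ad)$ and $L(s,\pi_n,\Ad)$ are regular at $s=1$, and (i) finishes the forward implication.

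The converse is the delicate half of the argument. Suppose $L(s,\pi,\Ad)$ is regular at $s=1$. The issue to rule out is that a pole in one of the two $\GL_i$-factors in (i) might be cancelled by a zero of the other. Since local Langlands $L$-factors attached to Weil--Deligne representations are of the form $P(q^{-s})^{-1}$ with $P \in \mathbb{C}[T]$ satisfying $P(0)=1$, they have no zeros; applying this observation to $\Ad \circ \phi_m$ and $\Ad \circ \phi_n$, neither factor in the product can vanish at $s=1$, so both factors are individually regular at $s=1$. Invoking (iii) in the reverse direction, $\pi_m$ and $\pi_n$ are generic, hence so is $\pi_m \otimes \pi_n$, and finally so is $\pi$ by (ii).

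The main obstacle, as indicated, is the no-cancellation step in the converse: everything else is a clean transfer through the restriction formalism and the factorization \eqref{Ad-L2}. Notably, the argument uses no special feature of the $\GSpin_4$ or $\GSpin_6$ cases, so the theorem is obtained in the full generality of $G = G_{m,n}^{r,s}$.
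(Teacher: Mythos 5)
Your proof is correct and follows essentially the same route as the paper's: restrict to $\pi_m \otimes \pi_n$, use the factorization \eqref{Ad-L2}, invoke the $\GL_n$ case of Conjecture~\ref{GP-R conj}, and note that neither local $L$-factor can vanish at $s=1$ so no cancellation can occur. Your only addition is to spell out why local Artin $L$-factors have no zeros (they are of the form $P(q^{-s})^{-1}$ with $P(0)=1$), a point the paper states without elaboration.
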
 

\begin{proof}  
Given $\pi$ there exist irreducible admissible representations 
$\pi_m$ of $\GL_m(F)$ and $\pi_n$ of $\GL_n(F)$ such that $\pi$ 
is a subrepresentation of the restriction to $G(F)$ of $\pi_m \otimes \pi_n$ 
as in \eqref{pi-mn}.  Now, $\pi$ is generic if and only if both $\pi_m$ and 
$\pi_n$ are generic. By the truth of Conjecture \ref{GP-R conj} for the 
general linear groups, the latter is equivalent to both 
$L(s, \pi_m, \Ad)$ and $L(s, \pi_n, \Ad)$ being regular at $s=1$. 
Hence, by \eqref{Ad-L2} and the fact that neither of the $L$-functions can have 
a zero at $s=1$, we have that 
$\pi$ is generic if and only if $L(s, \pi, \Ad)$ is regular at $s=1$. 
This proves the theorem. 
\end{proof} 

As we observed in Section \ref{sec:gspin46-structure}, the split groups $\GSpin_4$ and $\GSpin_6$ are special cases 
of $G_{m,n}^{r,s}$. Therefore, we have the following.

\begin{cor} 
Conjecture \ref{GP-R conj} holds for the groups $\GSpin_4$ and $\GSpin_6$. 
\end{cor}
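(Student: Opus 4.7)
The plan is to recognize the corollary as an immediate specialization of the preceding theorem, so the entire task reduces to matching the groups $\GSpin_4$ and $\GSpin_6$ with the family $G_{m,n}^{r,s}$ defined in \eqref{Gmn} and checking that the coprimality hypothesis $\gcd(r,s)=1$ holds.

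First I would revisit the incidental isomorphisms recorded in Section \ref{sec:gspin46-structure}:
\[
\GSpin_4 \cong \{(g_1,g_2) \in \GL_2 \times \GL_2 : \det g_1 = \det g_2\},
\qquad
\GSpin_6 \cong \{(g_1,g_2) \in \GL_1 \times \GL_4 : g_1^2 = \det g_2\}.
\]
Reading these against the defining equation $(\det g)^r = (\det h)^s$ of $G_{m,n}^{r,s}$, I would identify $\GSpin_4$ with $G_{2,2}^{1,1}$ and $\GSpin_6$ with $G_{1,4}^{2,1}$, noting that on the $\GL_1$-factor the determinant is simply the identity map.

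Next I would verify the hypothesis of the theorem: $\gcd(1,1) = 1$ and $\gcd(2,1) = 1$, so both identifications land in the range of parameters covered by Theorem (the statement preceding this corollary). Invoking that theorem for each choice of $(m,n,r,s)$ gives directly that an irreducible admissible representation $\pi$ of $\GSpin_4(F)$ or $\GSpin_6(F)$ is generic if and only if $L(s,\pi,\Ad)$ is regular at $s=1$, which is exactly the content of Conjecture \ref{GP-R conj} for these groups (the extra factor $L(s,1_{F^\times})$ coming from the one-dimensional center of $\widehat{\mathfrak g}$ was already absorbed into the bookkeeping convention set up in \eqref{Ad-L} and \eqref{Ad-L2}).

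There is essentially no obstacle: the genuine work has been done in establishing the restriction relation \eqref{pi-mn}, the factorization \eqref{Ad-L2}, and the preservation of genericity under restriction from $\GL_m(F) \times \GL_n(F)$ to $G(F)$ in Section \ref{restrictionofgeneric}. The only point worth flagging in the write-up is the convention distinguishing $\Ad$ from $\Ad_{\wG}$, so that no confusion arises about the trivial $L$-factor corresponding to the one-dimensional center of $\widehat{\mathfrak g}$; this is already handled by \eqref{Ad-L}, so the corollary follows in one line once the identification of groups is made explicit.
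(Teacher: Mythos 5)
Your proposal is correct and follows exactly the paper's route: the paper derives the corollary as an immediate specialization of the preceding theorem, having already noted in Section \ref{sec:gspin46-structure} that $\GSpin_4 = G_{2,2}^{1,1}$ and $\GSpin_6 = G_{1,4}^{2,1}$ (both with $\gcd(r,s)=1$). Your additional remarks about the $L(s,1_{F^\times})$ bookkeeping are consistent with the conventions set in \eqref{Ad-L} and \eqref{Ad-L2} and add nothing beyond what the paper already relies on.
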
 

\section{Representations of $\GSpin_4$} \label{genericclassification4}
In this section we list all the irreducible representations of $\GSpin_4(F)$ and then calculate 
their associated adjoint $L$-function explicitly.  To this end, we give the nilpotent 
matrix associated to their parameter in each case.

\subsection{The Reprsentations}
\subsubsection{Classification of representations of $\GSpin_4$} 

Following \cite{acgspin}, we have
\begin{equation} 
1 \longrightarrow {\GSpin}_4(F) \longrightarrow {\GL}_2(F) \times {\GL}_2(F) \longrightarrow F^{\times} \longrightarrow 1.  
\end{equation} 
Recall that 
\begin{equation} \label{convenient description of GSpin4(F)}
{\GSpin}_4(F) \s \{(g_1, g_2) \in {\GL}_2(F) \times {\GL}_2(F) : \det g_1 = \det g_2 \} , 
\end{equation}
\begin{equation} \label{L-gp for GSpin4}
{^L{\GSpin}_4} = \widehat{{\GSpin}_4}  = {\GSO}_{4}(\CC) \s  ({\GL}_2(\CC) \times {\GL}_2(\CC))  / \{ (z^{-1}, z) : z \in \CC^{\times}   \},  
\end{equation}
and
\begin{equation} \label{ex seq L-gp for GSpin4}
1 \longrightarrow \CC^{\times} \longrightarrow {\GL}_2(\CC) \times {\GL}_2(\CC) 
\overset{pr_4}{\longrightarrow} \widehat{{\GSpin}_4} \longrightarrow 1.  
\end{equation} 
When convenient, we view $\GSO_4$ as the group similitude orthogonal $4 \times 4$ matrices 
with respect to the anti-diagonal matrix 
\begin{equation} \label{J-matrix} 
J = J_4 = 
\begin{bmatrix} 
0&0&0&1 \\ 
0&0&1&0 \\
0&1&0&0 \\
1&0&0&0 
\end{bmatrix}. 
\end{equation} 
The Lie algebra of this group is also defined with respect to $J$ and an element $X$ 
in this Lie algebra satisfies 
\[ {}^tX J + J X = 0.  \]

\subsubsection{Construction of the $L$-packets of $\GSpin_4$ (recalled from \cite{acgspin})}
Given $\sigma \in \Irr(\GSpin_4)$ we have a lift $\ts \in \Irr({\GL}_2 \times {\GL}_2)$ such that 
\[
\sigma \hookrightarrow {\Res}_{{\GSpin}_{4}}^{\GL_2 \times \GL_2}(\ts).
\]
It follows form the LLC for $GL_n$ \cite{ht01, he00, scholze13} that there is a unique $\tvp_{\ts} \in \Phi({\GL}_2 \times {\GL}_2)$ corresponding to the representation $\ts.$
We now have a surjective, finite-to-one map 
\begin{eqnarray} \label{L map for GSpin4}
{\L}_{4} : {\Irr}({\GSpin}_{4}) & \longrightarrow & \Phi({\GSpin}_{4}) \\
\sigma & \longmapsto & pr_4 \circ \tvp_{\ts}, \nonumber
\end{eqnarray} 
which does not depend on the choice of the lifting $\ts.$
Then, for each $\vp \in \Phi(\GSpin_4),$ all inequivalent irreducible constituents of $\ts$ constitutes the $L$-packet 
\begin{equation} \label{def of L-packet for GSpin4}
\Pi_{\vp}({\GSpin}_{4}):=\Pi_{\ts}({\GSpin}_4) = 
\left\{ \sigma \, \middle| \, \sigma \hookrightarrow {\Res}_{{\GSpin}_{4}}^{\GL_2 \times \GL_2}(\ts) \right\} \Big\slash \s.
\end{equation}
Here, $\ts$ is the member in the singleton $\Pi_{\tvp}(\GL_2 \times \GL_2)$ and 
$\tvp \in \Phi(\GL_2 \times \GL_2)$ is such that $pr_{4} \circ \tvp=\vp.$ 
We note that the construction does not depends on the choice of $\tvp,$
due to the LLC for $\GL_2$, \cite[Lemma 2.4]{gk82}, \cite[Corollary 2.5]{tad92}, and \cite[Lemma 2.2]{hs11}.  
Further details can be found in \cite[Section 5.1]{acgspin}.

\subsubsection{The $L$-parameters of ${\GL}_2$} \label{lparametersgl2}
We recall the generic representations of $\GL_2(F)$ in this paragraph. We refer to \cite{wedhorn08, ku94, gr10} for details. 
Let $\chi: F^\times \rightarrow \CC^\times$ denote a continuous quasi-character of $F^\times$.
By Zelevinski (\cite[Theorem 9.7]{zel80} or \cite[Theorem 2.3.1]{ku94}) we know that 
the generic representations of ${\GL}_2$ are: 
the supercuspidals, $\St \otimes (\chi\circ \det)$ where $\St$ denotes the Steinberg representation, 
and normally induced representations $i_{\GL_1 \times \GL_1}^{\GL_2}(\chi_1 \otimes \chi_2)$ with $\chi_1 \neq \chi_2 \nu^{\pm1}.$ 
The only non-generic representation is $\chi \circ \det.$

\subsection{Generic Representations of $\GSpin_4$}

Following \cite[Section 5.3]{acgspin}, given $\vp \in \Phi(\GSpin_4),$ fix the lift 
\[ 
\tvp=\tvp_1 \otimes \tvp_2 \in \Phi({\GL}_2\times {\GL}_2)  
\] 
with $\tvp_i \in \Phi({\GL}_2)$ such that $\vp = pr_4 \circ \tvp$. 
Let 
\[ 
\ts = \ts_1 \boxtimes \ts_2 \in \Pi_{\tvp}({\GL}_2\times {\GL}_2) 
\] 
be the unique member such that $\{\ts_i\} = \Pi_{\tvp_i}({\GL}_2).$

Recall the notation 
\[
I^{\GSpin_4}(\ts) := \left\{ \chi \in \left({\GL}_2(F) \times {\GL}_2(F) /{\GSpin}_4(F)\right)^D \, \middle| \, \ts \otimes \chi \s \ts  \right\}.
\]
Then we have
\begin{equation} \label{1to1}
\Pi_{\vp}({\GSpin}_4) \, \overset{1-1}{\longleftrightarrow} \,I^{\GSpin_4}(\ts),
\end{equation}
and we recall that, by \cite[Proposition 5.7]{acgspin}, we have 
\begin{equation} \label{stab}
I^{\GSpin_4}(\ts) = \left\{ \begin{array}{lll}
         I^{{\SL}_2}(\ts_1), & 	\mbox{if $\ts_2 \s \ts_1\teta$ for some $\teta \in (F^{\times})^D$}; \\
        I^{{\SL}_2}(\ts_1) \cap I^{{\SL}_2}(\ts_2), & \mbox{if $\ts_2 \not\s \ts_1\teta$ for any $\teta \in (F^{\times})^D$}.
				\end{array} \right.
\end{equation}

\subsubsection{Irreducible Parameters} \label{irr parameter gspin4}

Let $\vp \in \Phi(\GSpin_4)$ be irreducible.  Then $\tvp,$ $\tvp_1,$ and $\tvp_2$ are all irreducible. 
By Section \ref{restrictionofgeneric}, we have the following.
\begin{pro} 
Let $\vp \in \Phi(\GSpin_4)$ be irreducible.  Then every member in $\Pi_{\vp}(\GSpin_4)$ is supercuspidal and generic.
\end{pro}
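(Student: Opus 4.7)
The plan is to lift $\vp$ to $\tvp = \tvp_1 \otimes \tvp_2 \in \Phi(\GL_2 \times \GL_2)$ with $pr_4 \circ \tvp = \vp$, and then transfer both supercuspidality and genericity between $\GL_2(F) \times \GL_2(F)$ and $\GSpin_4(F)$ by restriction.

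First I would observe that the irreducibility of $\vp$, combined with the fact that $pr_4$ has one-dimensional central kernel (cf.\ \eqref{ex seq L-gp for GSpin4}), forces each factor $\tvp_i : W'_F \to \GL_2(\CC)$ to be irreducible as an $L$-parameter. By the LLC for $\GL_2$, the unique member $\ts_i \in \Pi_{\tvp_i}(\GL_2)$ is then supercuspidal, and so $\ts = \ts_1 \boxtimes \ts_2$ is supercuspidal as a representation of $\GL_2(F) \times \GL_2(F)$. Since $(\GSpin_4)_{\der} = (\GL_2 \times \GL_2)_{\der}$, the pair $(\GSpin_4, \GL_2 \times \GL_2)$ satisfies \eqref{cond on G}, and the standard compactness-of-matrix-coefficients argument shows that every irreducible constituent of ${\Res}_{\GSpin_4}^{\GL_2 \times \GL_2}(\ts)$ remains supercuspidal. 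Hence every $\sigma \in \Pi_{\vp}(\GSpin_4)$ is supercuspidal.

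For genericity I would invoke the classification of generic representations of $\GL_2$ recalled in Section \ref{lparametersgl2}: every irreducible supercuspidal of $\GL_2$ is generic. Therefore both $\ts_1$ and $\ts_2$ are $\psi_i$-generic for appropriate characters, so $\ts$ is $\psi$-generic for some generic character $\psi$ of $\tU(F) = U(F)$. Applying Proposition \ref{conjugation}, every irreducible constituent of ${\Res}_{\GSpin_4}^{\GL_2 \times \GL_2}(\ts)$ is ${}^{t}\psi$-generic for some $t \in \tT(F)/T(F)$, and therefore every $\sigma \in \Pi_{\vp}(\GSpin_4)$ is generic.

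The only delicate point is propagating genericity from the single $\psi$-generic constituent supplied by \cite[Proposition 2.8]{tad92} to \emph{every} member of the restriction; this is precisely what Proposition \ref{conjugation} is designed to achieve, via the $\tT(F)/T(F)$-action twisting the generic character. The supercuspidality transfer, by contrast, is a formal consequence of matrix-coefficient support once \eqref{cond on G} is in place, so no additional input beyond the irreducibility of $\tvp_1$ and $\tvp_2$ is needed.
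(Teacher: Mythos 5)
Your proposal is correct and is essentially the argument the paper intends: the paper's own justification is the one-line observation that irreducibility of $\vp$ forces $\tvp_1,\tvp_2$ to be irreducible (hence $\ts_1\boxtimes\ts_2$ supercuspidal, hence generic) together with an appeal to Section \ref{restrictionofgeneric}, and you have simply filled in the standard transfer of supercuspidality and the use of Proposition \ref{conjugation} to propagate genericity to every constituent of the restriction. No substantive difference in route.
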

To study the internal structure of $\Pi_{\vp}(\GSpin_4)$, by \eqref{1to1}, 
we need to know the structure of $I^{\GSpin_4}(\ts)$, as we now recall from \cite{acgspin}.  
\begin{enumerate}[$\mathfrak{gnr}$-(a)]
\item When $\ts_2 \s \ts_1\teta$ for some $\teta \in (F^{\times})^D,$ we have
\[ 
I^{\GSpin_4}(\ts) \s 
\left\{ 
\begin{array}{llll}
         \{1\}, & 
         \mbox{if $\tvp_1$ (and hence also $\tvp_2$) is primitive or non-trivial on $\SL_2(\CC)$}; \\
         \ZZ/2\ZZ, &  
         \mbox{if $\tvp_1$ (and hence also $\tvp_2$) is dihedral w.r.t. one quadratic extension}; \\ 
         (\ZZ/2\ZZ)^2, &  
         \mbox{if $\tvp_1$ (and hence also $\tvp_2$) is dihedral w.r.t. three quadratic extensions}.
\end{array} \right. 
\]
\item When $\ts_2 \not\s \ts_1\teta$ for any $\teta \in (F^{\times})^D$, then by \eqref{stab} we have  
\[ 
I^{\GSpin_4}(\ts) \s \{1\} ~\text{or } \ZZ/2\ZZ.
\]
Since $\ts_2 \not\s \ts_1\teta$ for any $\teta \in (F^{\times})^D$,  
the case of both 
$\tvp_1$ and $\tvp_2$ being diredral w.r.t. three quadratic extensions is excluded.  
Thus, we have the following list:
\begin{itemize}
\item If at least one of $\tvp_i$ is primitive, then $I^{\GSpin_4}(\ts) \s \{1\}.$
\item If both are dihedral, then $I^{\GSpin_4}(\ts) \s \ZZ/2\ZZ.$
\end{itemize}
\end{enumerate}
From \cite[Proposition 2.1]{acgspin}, we recall the identification 
\begin{equation} \label{indentity}
\Delta^\vee = \left\{ \beta^\vee_1 = f^*_{11} - f^*_{12}, 
\beta^\vee_2 = f^*_{21} - f^*_{22} \right\}, 
\end{equation}
using the notation $f_{ij}$ and $f^*_{ij},$ $1 \le i, j \le 2,$ 
for the usual $\ZZ$-basis of characters and cocharacters of  $\GL_2 \times \GL_2$ 
and $\beta_1, \beta_2$ denote the simple roots of $\GSpin_4$.  We can use this 
identification to relate the nilpotent matrices associated to the parameters of $\GL_2 \times \GL_2$ 
and $\GSpin_4$, respectively. 

For both (a) and (b) above, we have 
\[
N_{{\GL}_2(\CC) \times {\GL}_2(\CC)} = 
\left( \begin{bmatrix} 0&0 \\  0&0 \end{bmatrix} , 
\begin{bmatrix} 0&0 \\  0&0 \end{bmatrix} \right) 
\overset{\eqref{indentity}}{\Longleftrightarrow}
N_{{\GSO}_4(\CC)}= 
0_{4\times4}. 
\]
\begin{rem}
We note that case (b) above was mentioned, less precisely, in \cite[Remark 5.10]{acgspin}.
\end{rem}

\subsubsection{Reducible Parameters}  \label{non-irr parameter gspin4}

If $\vp \in \Phi(\GSpin_4)$ is reducible, then at least one $\tvp_i$ must be reducible. 
Since the number of irreducible constituents in $\Res_{\SL_2}^{{\GL}_2} (\ts_i)$ is at most 2, 
we have 
$I^{\SL_2}(\ts_i) \s \{1\}, ~\text{or } \ZZ/2\ZZ.$
This implies that 
\[
I^{\GSpin_4}(\ts) \s \{1\}, ~ \text{or } \ZZ/2\ZZ.
\]

If $\tvp_i$ is reducible and generic, then
$\ts_i$ is either the Steinberg representation twisted by a character or an irreducibly induced representation from the Borel subgroup of ${\GL}_2.$ We make case-by-case arguments as follows.

\begin{enumerate}[$\mathfrak{gnr}$-(i)]

\item Note that the Steinberg representation of $\GL_2 \times \GL_2$ is of the form ${\St}_{\GL_2} \boxtimes {\St}_{\GL_2}.$ We have
\begin{equation} \label{generici}
{\Res}^{\GL_2 \times \GL_2}_{\GSpin_4} ({\St}_{\GL_2} \boxtimes {\St}_{\GL_2}) = {\St}_{\GSpin_4}
\end{equation}
and
\[
{\Res}^{\GL_2 \times \GL_2}_{\GSpin_4} 
\left( {\St}_{\GL_2} \otimes \chi_1 \boxtimes {\St}_{\GL_2} \otimes \chi_2 \right) 
= {\St}_{\GSpin_4} \otimes \chi
\]
for some $\chi.$
We have $I^{\GSpin_4}(\ts) \s \{1\}$ as $I^{G}(\St_G) \s \{1\}.$ Thus, by \eqref{stab}, the $L$-packet remains a singleton and the restriction is irreducible.

\begin{itemize}
\item To determine $\chi,$ we use the required properties of $\chi_1, \chi_2$.  
Using 
\begin{equation}  \label{descriptionT}
T=\left\{ \left( \begin{bmatrix} a&0 \\  0&b \end{bmatrix} , \begin{bmatrix} c&0 \\  0&d \end{bmatrix} \right) \, 
\middle| \, ab = cd \right\}, 
 \end{equation}
we have 
$
\chi_1(ab)=\chi_2(cd) ~~ \Leftrightarrow ~~ \chi_1=\chi_2. 
$
Denote $\chi_1 = \chi_2$ by $\chi$. 
\end{itemize}

For \eqref{generici}, we have 
\[
N_{{\GL}_2(\CC) \times {\GL}_2(\CC)} = 
\left( \begin{bmatrix} 0&1 \\  0&0 \end{bmatrix} , \begin{bmatrix} 0&1 \\  0&0 \end{bmatrix} \right) 
\overset{\eqref{indentity}}{\Longleftrightarrow}
N_{{\GSO}_4(\CC)}= 
\begin{bmatrix} 
0&1&1&0 \\ 
0&0&0&-1 \\
0&0&0&-1 \\
0&0&0&0 
\end{bmatrix} 
\]

\item Next we consider 
\begin{equation} \label{genricii}
{\Res}^{\GL_2 \times \GL_2}_{\GSpin_4} \left( i_{\GL_1 \times \GL_1}^{\GL_2}(\chi_1 \otimes \chi_2) \boxtimes {\St}_{\GL_2} \otimes \chi \right).
\end{equation}
By \eqref{stab}, the fact that $\ts_2 \not\s \ts_1\teta$ for any $\teta \in (F^{\times})^D$, and since $I^{G}(\St_G) \s \{1\}$,  
it follows that 
\[
I^{\GSpin_4}(\ts) \s \{1\}.
\]
Thus, the $L$-packet remains a singleton and the restriction \eqref{genricii} is irreducible.

\begin{itemize}
\item To describe the restriction \eqref{genricii}, we proceed similarly as above.  We have 
\[
\chi_1(a)\chi_2(b)=\chi(cd)=\chi(ab) ~~ \Leftrightarrow ~~ \chi_1\chi^{-1}(a) =\chi_2^{-1}\chi(b)
\]
Specializing to $a=b$ and $c=d$ in the center, we have
\[
\chi_1\chi_2\chi^{-2}=1
\] 
\end{itemize}
For \eqref{genricii} , we have 
\[
N_{{\GL}_2(\CC) \times {\GL}_2(\CC)} = 
\left( \begin{bmatrix} 0&0 \\  0&0 \end{bmatrix} , \begin{bmatrix} 0&1 \\  0&0 \end{bmatrix} \right) 
\overset{\eqref{indentity}}{\Longleftrightarrow}
N_{{\GSO}_4(\CC)}= 
\begin{bmatrix} 
0&0&1&0 \\ 
0&0&0&-1 \\
0&0&0&0 \\
0&0&0&0 
\end{bmatrix}.  
\]

\item We consider
\[
{\Res}^{\GL_2 \times \GL_2}_{\GSpin_4} \left( i_{\GL_1 \times \GL_1}^{\GL_2}(\chi_1 \otimes \chi_2) \boxtimes i_{\GL_1 \times \GL_1}^{\GL_2}(\chi_3 \otimes \chi_4) \right) 
= i_{T}^{\GSpin_4} \left( \chi_1 \otimes \chi_2 , \chi_3 \otimes  \chi_1 \chi_2 \chi_3^{-1} \right).
\]
Here, $\chi_1 \neq \chi_2 \nu^{\pm1}$ and $\chi_3 \neq \chi_4 \nu^{\pm1}.$ 
Note that by \eqref{stab} this induced representation may be irreducible or consist of two irreducible inequivalent constituents.
We have 
\[
N_{{\GL}_2(\CC) \times {\GL}_2(\CC)} = 
\left( \begin{bmatrix} 0&0 \\  0&0 \end{bmatrix} , \begin{bmatrix} 0&0 \\  0&0 \end{bmatrix} \right) 
\overset{\eqref{indentity}}{\Longleftrightarrow}
N_{{\GSO}_4(\CC)}=
\begin{bmatrix} 
0&0&0&0 \\ 
0&0&0&0 \\
0&0&0&0 \\
0&0&0&0 
\end{bmatrix}.  
\]

\item Given a supercuspidal $\ts \in \Irr(\GL_2)$, we consider
\begin{equation} \label{iv}
{\Res}^{\GL_2 \times \GL_2}_{\GSpin_4} \left( \ts \boxtimes {\St}_{\GL_2} \otimes \chi \right).
\end{equation}
Since $I^{G}(\St_G) \s \{1\},$ due to \eqref{stab}, the restriction \eqref{iv} is irreducible.
We then have 
\[
N_{\GL_2(\CC) \times \GL_2(\CC)} = 
\left( \begin{bmatrix} 0&0 \\  0&0 \end{bmatrix} , \begin{bmatrix} 0&1 \\  0&0 \end{bmatrix} \right) 
\overset{\eqref{indentity}}{\Longleftrightarrow}
N_{{\GSO}_4(\CC)} = 
\begin{bmatrix} 
0&0&1&0 \\ 
0&0&0&-1 \\
0&0&0&0 \\
0&0&0&0 
\end{bmatrix}.  
\]

\item Given supercuspidal $\ts \in \Irr(\GL_2),$ we next consider
\[
{\Res}^{\GL_2 \times \GL_2}_{\GSpin_4} \left(\ts \boxtimes i_{\GL_1 \times \GL_1}^{\GL_2}(\chi_1 \otimes \chi_2) \right).
\]
Note from  \eqref{stab} that this may be irreducible or consist of two irreducible inequivalent constituents.
We have 
\[
N_{{\GL}_2(\CC) \times {\GL}_2(\CC)} = 
\left( \begin{bmatrix} 0&0 \\  0&0 \end{bmatrix} , \begin{bmatrix} 0&0 \\  0&0 \end{bmatrix} \right) 
\overset{\eqref{indentity}}{\Longleftrightarrow}
N_{{\GSO}_4(\CC)} = 0_{4\times4}. 
\]

\end{enumerate}

\subsection{Non-Generic Representations of $\GSpin_4$} \label{nongeneric} 
If $\sigma \in \Irr(\GSpin_4)$ is non-generic, then $\sigma$ is of the form 
\begin{equation} \label{non-generic form}
{\Res}^{\GL_2 \times \GL_2}_{\GSpin_4} \left( (\chi \circ \det) \boxtimes  \ts \right), 
\end{equation}
with $\ts \in \Irr(\GL_2).$ Note this restriction is irreducible due to \eqref{stab}, and that as $\chi \circ \det$ is non-generic, so is the restriction $\sigma$ for any  $\ts \in \Irr(\GL_2).$

For $\tsigma=\St \in \Irr(\GL_2),$ we have 
\[
N_{{\GL}_2(\CC) \times {\GL}_2(\CC)} 
= 
\left( \begin{bmatrix} 0&0 \\  0&0 \end{bmatrix} , \begin{bmatrix} 0&1 \\  0&0 \end{bmatrix} \right) 
\overset{\eqref{indentity}}{\Longleftrightarrow}
N_{{\GSO}_4(\CC)} = 
\begin{bmatrix} 
0&0&1&0 \\ 
0&0&0&-1 \\
0&0&0&0 \\
0&0&0&0 
\end{bmatrix},  
\]
and otherwise we have 
\[
N_{{\GL}_2(\CC) \times {\GL}_2(\CC)} = 
\left( \begin{bmatrix} 0&0 \\  0&0 \end{bmatrix} , \begin{bmatrix} 0&0 \\  0&0 \end{bmatrix} \right) 
\overset{\eqref{indentity}}{\Longleftrightarrow}
 N_{{\GSO}_4(\CC)} = 0_{4\times4}. 
\]

We summarize the above information about the representations of $\GSpin_4$ in Table \ref{g4-maintable}.

\subsection{Computation of the Adjoint $L$-function for $\GSpin_4$} 
We now give explicit expressions for the adjoint $L$-function for each of the representations of $\GSpin_4(F)$.  
We start by recalling that the adjoint $L$-functions of the representations $\ts \in \Irr(\GL_2)$ are as follows. 
\[ 
L(s, \ts, \Ad_2) = 
 \begin{cases}
L(s)^2 L(s, \chi_1\chi_2^{-1}) L(s, \chi_1^{-1}\chi_2), & \mbox{ if } \ts=i_{\GL_1 \times \GL_1}^{\GL_2}(\chi_1 \boxtimes \chi_2) \mbox{ with } \chi_1 \chi_2^{-1} \neq \nu^{\pm1}; \\ 
L(s)L(s+1), &~~\text{if } \ts=\St_{\GL_2} \otimes \chi; \\
L(s) L(s, \ts, \Sym^2 \otimes \omega_{\ts}^{-1}), & \mbox{ if } \ts \mbox{ is supercuspidal}; \\  
L(s)^2 L(s-1)L(s+1), & \mbox{ if } \ts=\chi \circ \det.
\end{cases}
\]
Here, $L(s) = L(s, 1_{F^\times})$.  Recall our choice of notation  
\[ L(s, \ts, \Ad_2) = L(s) L(s, \ts, \Ad). \]

Combining with \eqref{Ad-L}, Sections \ref{irr parameter gspin4} and \ref{non-irr parameter gspin4}, we have the following. 

\begin{enumerate}
\item[$\mathfrak{gnr}$-(a)\&(b)] Given a supercuspidal $\sigma \in \Irr(\GSpin_4),$ we recall that 
\[
\sigma \subset  {\Res}^{\GL_2 \times \GL_2}_{\GSpin_4}(\ts_1 \boxtimes \ts_2) 
\]
for some supercuspidal $\ts_1 \boxtimes \ts_2 \in \Irr(\GL_2 \times \GL_2).$
By \eqref{Ad-L2} we have
\[ 
L(s, \sigma, \Ad) = 
L(s, \tsigma_1, \Sym^2 \otimes \omega_{\ts_1}^{-1}) L(s, \tsigma_2, \Sym^2 \otimes \omega_{\ts_2}^{-1}).
\]

\end{enumerate}

\begin{enumerate}[$\mathfrak{gnr}$-(i)]

\item Given 
\[
\sigma=\St_{\GSpin_4} \otimes \chi \in \Irr(\GSpin_4),
\]
by \eqref{Ad-L2} we have
\[
L(s, \sigma, \Ad) = L(s+1)^2.
\]

\item Given $\sigma \in \Irr(\GSpin_4)$ such that 
\[
\sigma = {\Res}^{\GL_2 \times \GL_2}_{\GSpin_4} \left( i_{\GL_1 \times \GL_1}^{\GL_2}(\chi_1 \otimes \chi_2) \boxtimes {\St}_{\GL_2} \otimes \chi \right), 
\]
by \eqref{Ad-L2} we have
\[
L(s, \sigma, \Ad)= L(s) L(s, \chi_1\chi_2^{-1}) L(s, \chi_1^{-1}\chi_2) L(s+1).
\]

\item Given $\sigma \in \Irr(\GSpin_4)$ such that 
\[
\sigma \subset {\Res}^{\GL_2 \times \GL_2}_{\GSpin_4} \left( i_{\GL_1 \times \GL_1}^{\GL_2}(\chi_1 \otimes \chi_2) \boxtimes i_{\GL_1 \times \GL_1}^{\GL_2}(\chi_3 \otimes \chi_4) \right)
\]
by \eqref{Ad-L2} we have
\[
 L(s, \sigma, \Ad) = L(s)^2 
 L(s, \chi_1\chi_2^{-1}) L(s, \chi_1^{-1}\chi_2) 
 L(s, \chi_3\chi_4^{-1}) L(s, \chi_3^{-1}\chi_4).
\]

\item Given $\sigma \in \Irr(\GSpin_4)$ such that 
\[
\sigma ={\Res}^{\GL_2 \times \GL_2}_{\GSpin_4} \left( \ts \boxtimes {\St}_{\GL_2} \otimes \chi \right)
\]
by \eqref{Ad-L2} we have
\[
 L(s, \sigma, \Ad) = 
 L(s, \tsigma_2, \Sym^2 \otimes \omega_{\ts_2}^{-1}) 
L(s+1). 
\]

\item Given $\sigma \in \Irr(\GSpin_4)$ such that 
\[
\sigma \subset {\Res}^{\GL_2 \times \GL_2}_{\GSpin_4} \left( \ts \boxtimes i_{\GL_1 \times \GL_1}^{\GL_2}(\chi_1 \otimes \chi_2) \right)
\]
by \eqref{Ad-L2} we have
\[
L(s, \sigma, \Ad) = 
L(s)
L(s, \tsigma_2, \Sym^2 \otimes \omega_{\ts_2}^{-1}) 
L(s, \chi_1\chi_2^{-1})L(s, \chi_1^{-1}\chi_2).
\]

\end{enumerate}

\begin{enumerate}
\item[$\mathfrak{nongnr}$]
Given a non-generic $\sigma \in \Irr(\GSpin_4),$ from \eqref{non-generic form}, we recall that 
\[
\sigma =  {\Res}^{\GL_2 \times \GL_2}_{\GSpin_4} (\chi\circ\det \,\boxtimes\, \tsigma)
\]
and by \eqref{Ad-L2} we have
\[
L(s, \sigma, \Ad) = 
L(s) L(s-1) L(s+1) 
L(s, \tsigma, \Ad).
\]

\end{enumerate}

We summarize the explicit computations above in Table \ref{g4-polestable}.

\section{Representations of $\GSpin_6$} \label{genericclassification6}

We now list all the representations of $\GSpin_6(F)$ and 
then calculate their associated adjoint $L$-function explicitly.  
Again, we do this explicit calculation by finding the $6 \times 6$ 
nilpotent matrix in the complex dual group $\GSO_6(\CC)$ in each case that is 
associated with the parameter of the representation.

\subsection{The Represenations}

\subsubsection{Classification of representations of $\GSpin_6$} 
Again, following \cite{acgspin}, we have
\begin{equation} 
1 \longrightarrow {\GSpin}_6(F) \longrightarrow {\GL}_1(F) \times {\GL}_4(F) 
\longrightarrow F^{\times} \longrightarrow 1.  
\end{equation} 
Recall that 
\begin{equation} \label{convenient description of GSpin6(F)}
{\GSpin}_6(F) \s \left\{ (g_1, g_2) \in {\GL}_1(F) \times {\GL}_4(F) : g_1^{2} = \det g_2 \right\},  
\end{equation}
\begin{equation} \label{L-gp for GSpin6}
{^L{\GSpin}_6}  = \widehat{{\GSpin}_6}  = {\GSO}_{6}(\CC)  \s  ({\GL}_1(\CC) \times {\GL}_4(\CC))  / \{ (z^{-2}, z) : z \in \CC^{\times} \}, 
\end{equation}
and 
\begin{equation} \label{ex seq L-gp for GSpin6}
1 \longrightarrow \CC^{\times} \longrightarrow {\GL}_1(\CC) \times {\GL}_4(\CC) 
\overset{pr_6}{\longrightarrow} 
\widehat{{\GSpin}_6} \longrightarrow 1. 
\end{equation}
Just as the rank two case, here too we view 
$\GSO_6$ as the group similitude orthogonal $6 \times 6$ matrices 
with respect to the analogous $6\times6$, anti-diagonal, matrix $J=J_6$ 
as in \eqref{J-matrix}, and similarly define its Lie algebra with respect to $J$.

\subsubsection{Construction of the $L$-packets of $\GSpin_6$ (recalled from \cite{acgspin})}
Given $\sigma \in \Irr(\GSpin_6)$ we have a lift $\ts \in \Irr({\GL}_1 \times {\GL}_4)$ such that 
\[
\sigma \hookrightarrow {\Res}_{{\GSpin}_{6}}^{\GL_1 \times \GL_4}(\ts).
\] 
It follows from the LLC for $GL_n$ \cite{ht01, he00, scholze13} that there is a unique $\tvp_{\ts} \in \Phi({\GL}_1 \times {\GL}_4)$ 
corresponding to the representation $\ts.$ 
We now have a surjective, finite-to-one map 
\begin{eqnarray} \label{L map for GSpin6}
{\L}_{6} : {\Irr}({\GSpin}_{6}) & \longrightarrow & \Phi({\GSpin}_{6}) \\
\sigma & \longmapsto & pr_6 \circ \tvp_{\ts},  \nonumber
\end{eqnarray} 
which does not depend on the choice of the lifting $\ts.$
Then, for each $\vp \in \Phi({\GSpin}_{6}),$
all inequivalent irreducible constituents of $\ts$ constitutes the $L$-packet
\begin{equation} \label{def of L-packet for GSpin6}
\Pi_{\vp}({\GSpin}_{6}):=\Pi_{\ts}({\GSpin}_6) = 
\left\{ \sigma : \sigma \hookrightarrow 
{\Res}_{{\GSpin}_{6}}^{\GL_1 \times \GL_4}(\ts) \right\} \Big\slash \s, 
\end{equation}
where $\ts$ is the unique member of $\Pi_{\tvp}(\GL_1 \times \GL_4)$ and 
$\tvp \in \Phi(\GL_1 \times \GL_4)$ is such that $pr_{6} \circ \tvp=\vp.$ 
We note that the construction does not depends on the choice of $\tvp$.   
Further details can be found in \cite[Section 6.1]{acgspin}.

Following \cite[Section 6.3]{acgspin},  given $\vp \in \Phi(\GSpin_6),$ fix the lift 
\[ 
\tvp=\teta \otimes \tvp_0 \in \Phi({\GL}_1\times {\GL}_4) 
\] 
with $\tvp_0 \in \Phi({\GL}_4)$ such that $\vp = pr_6 \circ \tvp$. 
Let 
\[ 
\ts = \teta \boxtimes \ts_0 \in \Pi_{\tvp}({\GL}_1\times {\GL}_4) 
\] 
be the unique member such that $\{\ts_0\} = \Pi_{\tvp_0}({\GL}_4).$

Recall that 
\[
I^{\GSpin_6}(\ts) := \left\{ \tchi \in \Big({\GL}_1(F) \times {\GL}_4(F) /{\GSpin}_6(F)\Big)^D : \ts \otimes \tchi \s \ts  \right\}.
\]
Then we have
\begin{equation} \label{1to1 gspin6}
\Pi_{\vp}({\GSpin}_6) \, \overset{1-1}{\longleftrightarrow} \,I^{\GSpin_6}(\ts),
\end{equation}
and by \cite[Lemma 6.5 and Proposition 6.6]{acgspin} we have
\begin{equation} \label{stab gspin6}
I^{\GSpin_6}(\ts) \s
\{
\tchi \in I^{\SL_4}(\ts_0) : \tchi^2 = 1_{F^\times} 
\}
\end{equation}
and any $\tchi \in I^{\GSpin_6}(\ts)$ is of the form 
\[
\tchi = (\tchi')^{-2} \boxtimes \tchi',
\]
for some $\tchi' \in (F^{\times})^D.$

\subsection{Generic Representations of $\GSpin_6$} \label{generic section gspin6}

Thanks to the group structure \eqref{convenient description of GSpin6(F)} and the relation of generic representations in 
Section \ref{restrictionofgeneric}, in order to classify the generic representations of $\GSpin_6,$ 
it suffices to classify the generic representations of $GL_4$.

Here are two key facts from the $\GL$ theory.
\begin{itemize}
\item 
Recall from \cite[Theorem 9.7]{zel80} and \cite[Theorem 2.3.1]{ku94} that 
a generic representation of $\GL_4$ is of the form
\[
i_{M_{\flat}}^{GL_4} (\sigma_\flat)
\]
where $M_\flat$ runs through any $F$-Levi subgroup of $\GL_4$ (including $\GL_4$ itself) 
and $\sigma_\flat$ is any essentially square-integrable representation of $M_\flat.$

\item For their $L$-parameters, we note from \cite[\S 5.2]{ku94} that the generic representations of $GL_4$ 
have Langlands parameters (i.e., 4-dimensional Weil-Deligne representations $(\rho, N)$) of the form
\[
(\rho_1 \otimes sp(r_1)) \otimes .. \otimes (\rho_t \otimes sp(r_t))
\] 
with $t \leq 4,$ 
where $\rho_i$'s are irreducible and no two segments are linked.
\end{itemize}

\subsubsection{Irreducible Parameters} \label{irr parameter gspin6}
Let $\vp \in \Phi(\GSpin_6)$ be irreducible.  
Then $\tvp$ and $\tvp_0$ are also irreducible. By Section \ref{restrictionofgeneric}, we have the following.
\begin{pro} 
Let $\vp \in \Phi(\GSpin_6)$ be irreducible. Every member in $\Pi_{\vp}(\GSpin_6)$ is supercuspidal and generic.
\end{pro}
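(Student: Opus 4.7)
The plan is to mirror the argument used immediately above for the analogous proposition on $\GSpin_4$, reading off both supercuspidality and genericity of the members of $\Pi_\vp(\GSpin_6)$ from the $\GL_4$-component of any lift. The key structural input is that \eqref{convenient description of GSpin6(F)} presents $\GSpin_6$ as a subgroup of $\GL_1 \times \GL_4$ with $(\GL_1\times\GL_4)_{\der} = \{1\}\times\SL_4 = (\GSpin_6)_{\der}$, placing us exactly in the framework of Section \ref{restrictionofgeneric}.

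First I would fix a lift $\tvp = \teta \otimes \tvp_0 \in \Phi(\GL_1\times\GL_4)$ with $pr_6 \circ \tvp = \vp$. The paragraph preceding the proposition records that irreducibility of $\vp$ forces both $\tvp$ and $\tvp_0$ to be irreducible; via the LLC for $\GL_4$ this translates into the unique member $\ts_0 \in \Pi_{\tvp_0}(\GL_4)$ being supercuspidal, so that $\ts = \teta \boxtimes \ts_0$ is a supercuspidal, generic representation of $\GL_1(F)\times\GL_4(F)$.

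Next I would propagate both properties to $\GSpin_6(F)$ via restriction. For supercuspidality, standard results on restriction between groups sharing the same derived subgroup (see, e.g., \cite{tad92, hs11}) give that $\Res^{\GL_1\times\GL_4}_{\GSpin_6}(\ts)$ decomposes as a finite direct sum of pairwise inequivalent supercuspidal representations of $\GSpin_6(F)$, which by \eqref{def of L-packet for GSpin6} are precisely the members of $\Pi_\vp(\GSpin_6)$. For genericity, I would invoke Section \ref{restrictionofgeneric}: since $\ts$ is $\psi$-generic for some generic character $\psi$, there is a unique $\psi$-generic member $\sigma_\psi \in \Pi_\vp(\GSpin_6)$, and by Proposition \ref{conjugation} every other $\sigma \in \Pi_\vp(\GSpin_6)$ has the form ${}^t\sigma_\psi$ for some $t \in \tT(F)/T(F)$ and is therefore ${}^t\psi$-generic.

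The main point warranting care is the translation from irreducibility of $\vp$ into supercuspidality of $\ts_0$; this uses that the kernel of $pr_6$ in \eqref{ex seq L-gp for GSpin6} is central, so any reducibility of $\tvp_0$ through a Levi of a proper parabolic of $\GL_4$ would descend to a reducibility of $\vp$ through a Levi of a proper parabolic of $\widehat{\GSpin_6}$. Everything past this point reduces to two well-established facts—preservation of supercuspidality under restriction across derived-equivalent pairs, and the fact that the $\tT(F)/T(F)$-orbit of $\psi$-generic characters exhausts the $L$-packet.
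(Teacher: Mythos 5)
Your proposal is correct and follows essentially the same route as the paper, which disposes of this proposition in one line by noting that irreducibility of $\vp$ forces $\tvp_0$ to be irreducible (hence $\ts_0$ supercuspidal by the LLC for $\GL_4$) and then appealing to Section \ref{restrictionofgeneric} for the transfer of supercuspidality and genericity under restriction across the derived-equivalent pair $\GSpin_6 \subseteq \GL_1\times\GL_4$. Your expansion of the two transfer steps (citing \cite{tad92,hs11} for supercuspidality and Proposition \ref{conjugation} for exhausting the packet by $\tT(F)/T(F)$-conjugates of the $\psi$-generic member) is exactly what the paper leaves implicit.
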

To see the internal structure of $\Pi_{\vp}(\GSpin_6),$ we need, by \eqref{1to1 gspin6}, to know  
the detailed structure of $I^{\GSpin_6}(\ts)$ as follows.  

\begin{enumerate}[$\mathfrak{gnr}$-(a)]
\item 
Given $\sigma \in \Irr_{\rm sc}(\GSpin_6),$ we have 
\begin{equation} \label{sc gl4}
\tsigma = \tsigma_0 \boxtimes \teta \in {\Irr}_{\rm sc}(\GL_4 \times \GL_1).
\end{equation} 
From \cite[Proposition 2.1]{acgspin}, we recall the identification:
\begin{equation} \label{indentitygspin6}
\Delta^\vee = \left\{ \beta^\vee_1 = f^*_2 - f^*_3, \beta^\vee_2 = f^*_1 - f^*_2, \beta^\vee_3 = f^*_3 - f^*_4 \right\}.
\end{equation}
using the notation $f_{ij}$ and $f^*_{ij},$ $1 \le i, j \le 4,$ for the usual $\ZZ$-basis of characters and cocharacters of  $\GL_4$. 
Also, $ \{ \beta_1, \beta_2, \beta_3 \} $ are the simple roots of $\GSpin_6$. 

We have 
\[
N_{ {\GL}_4(\CC) \times {\GL}_1(\CC)} 
= 
\left(
0_{4\times4}, 
0 \right) 
\overset{\eqref{indentitygspin6}}{\Longleftrightarrow} 
N_{{\GSO}_6(\CC)} = 0_{6\times6}. 
\]

\end{enumerate}

\subsubsection{Reducible Parameters} \label{non-irr parameter gspin6}
When $\tvp_0$ is not irreducible, we have proper parabolic inductions.  
An exhaustive list of $F$-Levi subgroups $\bM$ of $\GSpin_6$ (up to isomorphism) is as follows.
\begin{itemize}

\item $\bM \s \GL_1 \times GL_1 \times \GL_1 \times \GL_1 = \tbM \cap \GSpin_6$,  
where $\tbM = (\GL_1 \times \GL_1 \times \GL_1 \times \GL_1) \times \GL_1.$  

\item $\bM \s \GL_2 \times \GL_1 \times \GL_1 = \tbM \cap \GSpin_6$, 
where $\tbM = ( \GL_2 \times \GL_1 \times \GL_1) \times \GL_1.$  

\item $\bM \s \GL_3 \times \GL_1= \tbM \cap \GSpin_6$, 
where $\tbM = (\GL_3 \times \GL_1) \times \GL_1.$   
(Note: The factor $\GL_1$ of $\bM$ is $\GSpin_0$  by convention.)

\item $\bM \s \GL_1 \times \GSpin_4 = \tbM \cap \GSpin_6$, 
where $\tbM = (\GL_2 \times \GL_2) \times \GL_1.$  

\item $\bM \s \GSpin_6 = \tbM \cap \GSpin_6,$ 
where $\tbM = \GL_4 \times \GL_1.$ 

\end{itemize}
(Note that $\bM \s \GL_2 \times \GL_2$ does not occur on this list.)  
We now consider each case and, by abuse of notation, conflate algebraic groups 
and their $F$-points.

\begin{enumerate}[$\mathfrak{gnr}$-(I)]
\item 
$\bM \s \GL_1 \times GL_1 \times \GL_1 \times \GL_1$ and 
$\tbM = (\GL_1 \times \GL_1 \times \GL_1 \times \GL_1) \times \GL_1$. 

Given $\chi_i \in (F^\times)^D$ we consider 
\begin{equation} \label{ps gspin6}
i_{M}^{\GSpin_6}(\chi_1 \boxtimes \chi_2 \boxtimes \chi_3 \boxtimes \chi_4).
\end{equation}
Write   
$
\chi_1 \boxtimes \chi_2 \boxtimes \chi_3 \boxtimes \chi_4 = (\tchi_1 \boxtimes \tchi_2 \boxtimes \tchi_3 \boxtimes \tchi_4 \boxtimes \teta)|_{M}
$
with $\tchi_i, \teta \in (F^\times)^D$ so that
\[
\tchi_1 \tchi_2 \tchi_3 \tchi_4 = \teta^2.
\]
Then we have the following relations  
\begin{equation} \label{chi-tchi}
\chi_1=\tchi_1,~ \chi_2=\tchi_2, ~ \chi_3=\tchi_3, ~ \chi_4=\teta^2(\tchi_2 \tchi_3 \tchi_4)^{-1}.
\end{equation}

By Section \ref{restrictionofgeneric}, we know that the representation \eqref{ps gspin6} 
is generic if and only if its lift 
\begin{equation} \label{ps gl1x4} 
i_{\tM}^{\GL_4 \times GL_1} (\tchi_1 \boxtimes \tchi_2 \boxtimes \tchi_3 \boxtimes \tchi_4 \boxtimes \teta)
\end{equation}
is generic if and only if 
\begin{equation} \label{ps gl4} 
i_{\GL_1 \times \GL_1 \times \GL_1 \times \GL_1}^{GL_4} 
(\tchi_1 \boxtimes \tchi_2 \boxtimes \tchi_3 \boxtimes \tchi_4)
\end{equation}
is generic. 
By the classification of the generic representations of $\GL_n$ 
(\cite[Theorem 9.7]{zel80} and \cite[Theorem 2.3.1]{ku94}), 
this amounts to \eqref{ps gl4} being irreducible.  
By \cite[Theorem 2.1.1]{ku94} and \cite{bz77, zel80}, the necessary and sufficient 
condition for this to occur is that there is no pair $i, j$ with $i \neq j$ such that
\[
\tchi_i = \nu \tchi_j. 
\]

We have 
\[
N_{ {\GL}_4(\CC) \times {\GL}_1(\CC)}=\left(
0_{4\times4}, 
0 \right) 
\overset{\eqref{indentitygspin6}}{\Longleftrightarrow} N_{{\GSO}_6(\CC)} =
0_{6\times6} 
\]

\item $\bM \s \GL_2 \times \GL_1 \times \GL_1$ and 
$\tbM = ( \GL_2 \times \GL_1 \times \GL_1) \times \GL_1$. 

Given $\sigma_0\in \Irr_{\rm esq}(\GL_2)$ and  $\chi_1,\chi_2 \in (F^\times)^D$, 
we consider 
\begin{equation} \label{ps gspin6 II}
i_{M}^{\GSpin_6}(\sigma_0 \boxtimes \chi_1 \boxtimes \chi_2).
\end{equation}
Write 
$
\sigma_0 \boxtimes \chi_1 \boxtimes \chi_2 = (\tsigma_0 \boxtimes \tchi_1 \boxtimes \tchi_2 \boxtimes \teta)|_{M}
$
with $\tsigma_0 \in \Irr_{\rm esq}(\GL_2), \tchi_i, \teta \in (F^\times)^D$.

Given $(g,h_1,h_2,h_3) \in \tM$ with $\det (g h_1h_2) = h_3^{2},$
\begin{itemize}
\item if we set $(g, h_1, h_3) \in M$, we have
\begin{eqnarray*}
\tsigma_0(g) \tchi_1(h_1) \tchi_2(h_2) \teta(h_3) & = & 
\tsigma_0(g) \tchi_1(h_1) \tchi_2(\det g^{-1} h_1^{-1}h_3^{2}) \teta(h_3) \\ 
& =& (\tsigma_0 \tchi^{-1}_2 \circ \det)(g) (\tchi_1\tchi_2^{-1})(h_1)(\tchi_2^2\teta)(h_3) \\ 
& = & \sigma(g)\chi_1(h_1)\chi_2(h_3).
\end{eqnarray*}
Then we have
\[
\tsigma_0 =\sigma_0 \tchi_2,~~ \tchi_1=\chi_1 \tchi_2,~~\teta= \chi_2 \tchi_2^{-2}.
\]
\item If we set $(g, h_2, h_3) \in M$, we have
\begin{eqnarray*} 
\tsigma_0(g)\tchi_1(h_1)\tchi_2(h_2)\teta(h_3) & = & 
\tsigma_0(g)\tchi_1(\det g^{-1} h_2^{-1}h_3^{2})\tchi_2(h_2)\teta(h_3) \\ 
& = & (\tsigma_0 \tchi^{-1}_1 \circ \det)(g) (\tchi_2\tchi_1^{-1})(h_2)(\tchi_1^2\teta)(h_3) \\ 
& = & \sigma(g)\chi_1(h_2)\chi_2(h_3).
\end{eqnarray*} 
Then we have
\begin{equation} \label{case 2}
\tsigma_0 =\sigma_0 \tchi_1,~~ \tchi_2=\chi_2\tchi_1,~~\teta= \chi_1 \tchi_1^{-2}.
\end{equation}
\end{itemize}

As before, the representation \eqref{ps gspin6 II} is generic if and only if its lift 
\begin{equation} \label{ps gl1x4 II} 
i_{\tM}^{\GL_4 \times GL_1} (\tsigma_0 \boxtimes \tchi_1 \boxtimes \tchi_2 \boxtimes \teta)
\end{equation}
is generic if and only if 
\begin{equation} \label{ps gl4 II} 
i_{\GL_2 \times \GL_1 \times \GL_1}^{GL_4} (\tsigma_0 \boxtimes \tchi_1 \boxtimes \tchi_2)
\end{equation}
is generic. 
Again by the classification of the generic representations of $\GL_n$ 
this amounts to \eqref{ps gl4 II} being irreducible.  
Hence, we must have
\[
\tchi_1 \neq \nu^{\pm1} \tchi_2.
\]
In other words, given $(g,h_1,h_2,h_3) \in \tM$ with $\det (g h_1h_2) = h_3^{2},$ 
\begin{itemize}
\item if we set $(g, h_1, h_3) \in M,$ then 
\[
\chi_1 \neq \nu^{\pm1};
\]
\item if we set $(g, h_2, h_3) \in M,$ then 
\[
\chi_2 \neq \nu^{\pm1}.
\]
\end{itemize}\
We have the following two cases. 
If $\sigma_0$ is supercuspidal, then 
\[
N_{ {\GL}_4(\CC) \times {\GL}_1(\CC)} = 
\left(0_{4\times4}, 
0 \right) 
\overset{\eqref{indentitygspin6}}{\Longleftrightarrow} 
N_{{\GSO}_6(\CC)} = 0_{6\times6}.  
\]

If $\sigma_0$ is non-supercuspidal, then 
\[
N_{ {\GL}_4(\CC) \times {\GL}_1(\CC)} = 
\left(
\begin{bmatrix} 
0&1&0&0 \\  
0&0&0&0 \\  
0&0&0&0 \\  
0&0&0&0
\end{bmatrix}, 0 \right) 
\overset{\eqref{indentitygspin6}}{\Longleftrightarrow} 
N_{{\GSO}_6(\CC)} =
\begin{bmatrix} 
0&0&0&0&0&0 \\ 
0&0&1&0&0&0 \\ 
0&0&0&0&0&0 \\ 
0&0&0&0&-1&0 \\ 
0&0&0&0&0&0 \\ 
0&0&0&0&0&0 
\end{bmatrix}.  
\]

\item $\bM \s \GL_3 \times \GL_1$ and 
$\tbM = (\GL_3 \times \GL_1) \times \GL_1.$

Given $\sigma_0 \in \Irr_{\rm esq}(\GL_3)$ and $\chi \in (F^\times)^D$, we consider 
\begin{equation} \label{ps gspin6 III}
i_{M}^{\GSpin_6}(\sigma_0 \boxtimes \chi).
\end{equation}
Write 
$
\sigma_0 \boxtimes \chi = (\tsigma_0 \boxtimes \tchi \boxtimes \teta)|_{M}
$ 
with $\tsigma_0 \in \Irr_{\rm esq}(\GL_3), \tchi, \teta \in (F^\times)^D.$

Given $(g,h_1,h_2) \in \tM$ with $\det(g h_1) = h_2^{2}$,  
if we set $(g,h_2) \in M$, then we have
\begin{eqnarray} \label{case 3}
\tsigma_0(g)\tchi(h_1)\teta(h_2) 
& = & \tsigma_0(g)\tchi(\det g^{-1} h_2^{2})\teta(h_2) \\ 
\nonumber
& = & (\tsigma_0 \tchi^{-1} \circ \det)(g) (\tchi^2\teta)(h_2) \\ 
\nonumber
& = & \sigma(g)\chi(h_2).
\end{eqnarray}
Then, we have
\[
\tsigma_0 =\sigma_0 \tchi \quad \mbox{ and } \quad \teta= \chi_2 \tchi^{-2}.
\]

As before, \eqref{ps gspin6 III} is generic if and only if its lift  
\begin{equation} \label{ps gl1x4 III} 
i_{\tM}^{\GL_4 \times GL_1} (\tsigma_0 \boxtimes \tchi \boxtimes \teta)
\end{equation}
is generic if and only if 
\begin{equation} \label{ps gl4 III} 
i_{\GL_3 \times \GL_1}^{GL_4} (\tsigma_0 \boxtimes \tchi)
\end{equation}
is generic. 
This amounts to \eqref{ps gl4 III} being irreducible as before, 
which is always true since $\tsigma_0$ is an essentially square integrable 
representation of $\GL_3$. 
Note that by the classification of essentially square-integrable representations of 
$\GL_3$ (\cite[Proposition 1.1.2]{ku94}), $\tsigma_0$ must be either supercuspidal or 
the unique subrepresentation of
\begin{equation} \label{3gl1}
i_{\GL_1 \times \GL_1 \times \GL_1}^{\GL_3} \left( \nu \chi \boxtimes \chi \boxtimes \nu^{-1} \chi \right)
\end{equation}
with any $\chi \in (F^\times)^D.$

We have the following two cases. If $\sigma_0$ is supercuspidal, then 
\[
N_{ {\GL}_4(\CC) \times {\GL}_1(\CC)} = 
\left( 0_{4\times4}, 
0 \right) 
\overset{\eqref{indentitygspin6}}{\Longleftrightarrow} 
N_{{\GSO}_6(\CC)} = 
0_{6\times6}.  
\]

If $\sigma_0$ is the non-supercuspidal, unique, subrepresentation of \eqref{3gl1}, then 
\[
N_{ {\GL}_4(\CC) \times {\GL}_1(\CC)} = 
\left(
\begin{bmatrix} 
0&1&0&0 \\  
0&0&1&0 \\  
0&0&0&0 \\  
0&0&0&0
\end{bmatrix}, 0 \right) 
\overset{\eqref{indentitygspin6}}{\Longleftrightarrow} N_{{\GSO}_6(\CC)} =
\begin{bmatrix} 
0&1&0&0&0&0 \\ 
0&0&1&0&0&0 \\ 
0&0&0&0&0&0 \\ 
0&0&0&0&-1&0 \\ 
0&0&0&0&0&-1 \\ 
0&0&0&0&0&0 
\end{bmatrix}.  
\]

\item $\bM \s \GL_1 \times \GSpin_4$ and $\tbM = (\GL_2 \times \GL_2) \times \GL_1.$

Given $\sigma_0 \in \Irr_{\rm esq}(\GSpin_4)$ and $\chi \in (F^\times)^D$ we consider 
\begin{equation} \label{ps gspin6 IV}
i_{M}^{\GSpin_6}( \chi \boxtimes \sigma_0).
\end{equation}
Write 
$
\chi \boxtimes \sigma_0  \subset (\tsigma_1 \boxtimes \tsigma_2 \boxtimes  \teta)|_{M}
$ 
with $\tsigma_i \in \Irr_{\rm esq}(\GL_2), \teta \in (F^\times)^D.$

As before, \eqref{ps gspin6 IV} is generic if and only if its lift  
\begin{equation} \label{ps gl1x4 IV} 
i_{\tM}^{\GL_4 \times GL_1} (\tsigma_1 \boxtimes \tsigma_2 \boxtimes \teta)
\end{equation}
is generic if and only if 
\begin{equation} \label{ps gl4 IV} 
i_{\GL_2 \times \GL_2}^{GL_4} (\tsigma_1 \boxtimes \tsigma_2)
\end{equation}
is generic. 
This amounts to \eqref{ps gl4 IV} being irreducible.  Thus, we must have
\[
\tsigma_1 \neq  \nu^{\pm1} \tsigma_2.
\]
We have several cases to consider.  
If $\sigma_0$ is supercuspidal (so are $\tsigma_1$ and $\tsigma_2$), then 
\[
N_{ {\GL}_4(\CC) \times {\GL}_1(\CC)}=\left(
0_{4\times4} 
0 \right) 
\overset{\eqref{indentitygspin6}}{\Longleftrightarrow} 
N_{{\GSO}_6(\CC)} = 0_{6\times6}.  
\]

If $\sigma_0$ is non-supercuspidal,  then 
for supercuspidal $\tsigma_1$ and non-supercuspidal $\tsigma_2$ we have 
\[
N_{ {\GL}_4(\CC) \times {\GL}_1(\CC)} = 
\left(
\begin{bmatrix} 
0&0&0&0 \\  
0&0&0&0 \\  
0&0&0&1 \\  
0&0&0&0
\end{bmatrix}, 0 \right) 
\overset{\eqref{indentitygspin6}}{\Longleftrightarrow} 
N_{{\GSO}_6(\CC)} =
\begin{bmatrix} 
0&0&0&0&0&0 \\ 
0&0&0&1&0&0 \\ 
0&0&0&0&-1&0 \\ 
0&0&0&0&0&0 \\ 
0&0&0&0&0&0 \\ 
0&0&0&0&0&0 
\end{bmatrix};  
\]
for non-supercuspidal $\tsigma_1$ and supercuspidal $\tsigma_2$ we have 
\[
N_{ {\GL}_4(\CC) \times {\GL}_1(\CC)} = 
\left(
\begin{bmatrix} 
0&1&0&0 \\  
0&0&0&0 \\  
0&0&0&0 \\  
0&0&0&0
\end{bmatrix}, 0 \right) 
\overset{\eqref{indentitygspin6}}{\Longleftrightarrow} 
N_{{\GSO}_6(\CC)} = 
\begin{bmatrix} 
0&0&0&0&0&0 \\ 
0&0&1&0&0&0 \\ 
0&0&0&0&0&0 \\ 
0&0&0&0&-1&0 \\ 
0&0&0&0&0&0 \\ 
0&0&0&0&0&0 
\end{bmatrix};  
\]
and for non-supercuspidal $\tsigma_1$ and $\tsigma_2$ we have 
\[
N_{ {\GL}_4(\CC) \times {\GL}_1(\CC)} = 
\left(
\begin{bmatrix} 
0&1&0&0 \\  
0&0&0&0 \\  
0&0&0&1 \\  
0&0&0&0
\end{bmatrix}, 0 \right) 
\overset{\eqref{indentitygspin6}}{\Longleftrightarrow} 
N_{{\GSO}_6(\CC)} =
\begin{bmatrix} 
0&0&0&0&0&0 \\ 
0&0&1&1&0&0 \\ 
0&0&0&0&-1&0 \\ 
0&0&0&0&-1&0 \\ 
0&0&0&0&0&0 \\ 
0&0&0&0&0&0 
\end{bmatrix}.  
\]

\item $\bM \s \GSpin_6$ and $\tbM = \GL_4 \times \GL_1.$  

Given $\sigma \in \Irr_{\rm esq}(\GSpin_6) \setminus \Irr_{\rm sc}(\GSpin_6)$, we consider 
\[
\sigma \subset (\tsigma \boxtimes \teta)|_{M}
\]
with $\tsigma \in \Irr_{\rm esq}(\GL_4) \setminus \Irr_{\rm sc}(\GL_4), \teta \in (F^\times)^D.$
Here, we note that $\vp \in \Phi(\GSpin_6)$ is not irreducible and neither 
$\tsigma$ nor $\sigma$ is supercuspidal. 
It is clear that $\sigma$ is generic as $\tsigma \boxtimes \teta$ is. 
By the classification of essentially square-integrable representations of 
$\GL_4$ (\cite[Proposition 1.1.2]{ku94}), $\tsigma$ must be the unique subrepresentation of either 
\begin{equation} \label{4gl1}
i_{\GL_1 \times \GL_1 \times \GL_1 \times \GL_1}^{\GL_4} 
\left( \nu^{3/2} \tchi \boxtimes \nu^{1/2} \tchi \boxtimes 
\nu^{-1/2} \tchi \boxtimes \nu^{-3/2} \tchi \right)
\end{equation}
with any $\tchi \in (F^\times)^D$ (i.e., $\ts=\St_{\GL_4} \otimes \tchi$), 
or of 
\begin{equation} \label{2gl2}
i_{\GL_2 \times \GL_2}^{\GL_4} 
\left( \nu^{1/2} \ttau \boxtimes \nu^{-1/2} \ttau \right)
\end{equation}
with any $\ttau \in \Irr_{\rm sc}(\GL_2)$.

Now, for \eqref{4gl1} we have 
\[
N_{ {\GL}_4(\CC) \times {\GL}_1(\CC)} = 
\left(
\begin{bmatrix} 
0&1&0&0 \\  
0&0&1&0 \\  
0&0&0&1 \\  
0&0&0&0
\end{bmatrix}, 0 \right) 
\overset{\eqref{indentitygspin6}}{\Longleftrightarrow} 
N_{{\GSO}_6(\CC)} =
\begin{bmatrix} 
0&1&0&0&0&0 \\ 
0&0&1&1&0&0 \\ 
0&0&0&0&-1&0 \\ 
0&0&0&0&-1&0 \\ 
0&0&0&0&0&-1\\ 
0&0&0&0&0&0 
\end{bmatrix};  
\]
and
for \eqref{2gl2} we have 
\[
N_{ {\GL}_4(\CC) \times {\GL}_1(\CC)} = 
\left(
\begin{bmatrix} 
0&0&1&0 \\  
0&0&0&1 \\  
0&0&0&0 \\  
0&0&0&0
\end{bmatrix}, 0 \right) 
\overset{\eqref{indentitygspin6}}{\Longleftrightarrow} 
N_{{\GSO}_6(\CC)} =
\begin{bmatrix} 
0&0&1&1&0&0 \\ 
0&0&0&0&0&0 \\ 
0&0&0&0&0&-1 \\ 
0&0&0&0&0&-1 \\ 
0&0&0&0&0&0 \\ 
0&0&0&0&0&0 
\end{bmatrix}.  
\]
\end{enumerate}
(We note, cf. \cite[(4.1.5)]{tate79}, that $N_{ {\GL}_4(\CC)}$ is of the form 
$O_{2 \times 2} \otimes I_{2 \times 2} + \begin{bmatrix} 0 & 1 \\ 0 & 0 \end{bmatrix} \otimes I_{2 \times 2}. $)

\subsection{Non-Generic Representaions of $\GSpin_6$} \label{nongeneric section gspin6}

Using the transitivity of the parabolic induction and the classification of generic representations of $\GL_n$,  
(\cite[Theorem 9.7]{zel80} and \cite[Theorem 2.3.1]{ku94}), the non-generic representations of 
$\GSpin_6$ are as follows.

\begin{enumerate}[$\mathfrak{nongnr}$-(A)]
\item $\bM \s \GL_1 \times GL_1 \times \GL_1 \times \GL_1$ and 
$\tbM = (\GL_1 \times \GL_1 \times \GL_1 \times \GL_1) \times \GL_1.$

Given $\chi_i \in (F^\times)^D$, by Section \ref{restrictionofgeneric} and using \eqref{chi-tchi}, 
the representation \eqref{ps gspin6} contains a non-generic constituent if and only if the same is true for 
\begin{equation} \label{ps gl1x4 non} 
i_{\tM}^{\GL_4 \times GL_1} (\tchi_1 \boxtimes \tchi_2 \boxtimes \tchi_3 \boxtimes \tchi_4 \boxtimes \teta)
\end{equation}
if and only if 
\begin{equation} \label{ps gl4 non}
i_{\GL_1 \times \GL_1 \times \GL_1 \times \GL_1}^{GL_4} 
(\tchi_1 \boxtimes \tchi_2 \boxtimes \tchi_3 \boxtimes \tchi_4)
\end{equation}
contains a non-generic constituent.  
This amounts to \eqref{ps gl4 non} being reducible. 
As before, the necessary and sufficient condition for this to occur is that 
there is some pair $i, j$ with $i \neq j$ such that
$ \tchi_i = \nu \tchi_j$.

By the Langlands classification and the description of constituents of the parabolic induction 
(see \cite[Theorem 7.1]{zel80}, \cite[Theorem 7.1]{rod82}, and  \cite[Theorems 2.1.1 \S 5.1.1]{ku94}), 
each constituent can be described as a Langlands quotient, denoted by $Q(...)$, as follows.

The first case is when there is only one pair, say 
$\tchi_1 =  \nu^{1/2} \tchi$ and $\tchi_2 = \nu^{-1/2} \tchi$ 
for some $\tchi \in (F^\times)^D$ while 
$\tchi_3 \neq  \nu^{\pm1}\tchi_j$ for $j \neq 3$ and 
$\tchi_4 \neq  \nu^{\pm1}\tchi_j$ for $j \neq 4.$ 
Then we have the non-generic constituent
\begin{equation} \label{nongenericA1}
Q\left( [ \nu^{1/2} \tchi], [ \nu^{-1/2} \tchi], [\tchi_3], [\tchi_4] \right),  
\end{equation}
which is the Langlands quotient of
\[
i_{\GL_2 \times \GL_1 \times \GL_1}^{GL_4} 
\left( Q\left( [  \nu^{1/2} \tchi ], [ \nu^{-1/2}\tchi ] \right) 
\boxtimes \tchi_3 \boxtimes \tchi_4 \right)
= 
i_{\GL_2 \times \GL_1 \times \GL_1}^{GL_4} 
\left( \left( \tchi \circ \det \right) \boxtimes \tchi_3 \boxtimes \tchi_4 \right).
\]
We have 
\[
N_{ {\GL}_4(\CC) \times {\GL}_1(\CC)}=\left(
0_{4\times4}, 
0 \right) 
\overset{\eqref{indentitygspin6}}{\Longleftrightarrow} N_{{\GSO}_6(\CC)} =
0_{6\times6}. 
\]

Note that the other constituent of this induced representation, which is generic, is 
\begin{eqnarray*}
Q\left( [ \nu^{-1/2}\tchi, \nu^{1/2} \tchi], [\tchi_3], [\tchi_4] \right) 
& = & 
i_{\GL_2 \times \GL_1 \times \GL_1}^{GL_4} 
\left( Q\left( [\nu^{-1/2} \tchi, \nu^{1/2} \tchi] \right) \boxtimes \tchi_3 \boxtimes \tchi_4 \right) \\ 
& = & 
i_{\GL_2 \times \GL_1 \times \GL_1}^{GL_4} 
\left( (\St \otimes \tchi) \boxtimes \tchi_3 \boxtimes \tchi_4 \right).
\end{eqnarray*}

The next case is when there are two pairs, say 
$\tchi_1 = \nu\tchi$, $\tchi_2 = \tchi$, and $\tchi_3 = \nu^{-1} \tchi$ 
for some $\tchi \in (F^\times)^D$ 
and $\tchi_4 \neq \nu^{\pm1} \tchi_i$ for $i = 1,2,3$.  
Then we have the following three non-generic constituents:
\begin{eqnarray} 
Q\left( [ \nu\tchi], [ \tchi], [ \nu^{-1} \tchi], [\tchi_4] \right) 
&=& i_{\GL_3 \times \GL_1}^{GL_4} ((\tchi \circ \det) \boxtimes \tchi_3 \boxtimes \tchi_4); 
\label{nongenericA2}
\\ 
Q\left( [\tchi, \nu \tchi], [\nu^{-1} \tchi], [\tchi_4] \right); && 
\label{nongenericA3} 
\\  
Q\left( [\nu \tchi], [ \tchi, \nu^{-1} \tchi], [\tchi_4] \right). && 
\label{nongenericA4} 
\end{eqnarray} 
For \eqref{nongenericA2} we have 
\[
N_{ {\GL}_4(\CC) \times {\GL}_1(\CC)} = 
\left( 0_{4\times4},  
0 \right) 
\overset{\eqref{indentitygspin6}}{\Longleftrightarrow} 
N_{{\GSO}_6(\CC)} = 0_{6\times6},  
\]
for \eqref{nongenericA3} we have 
\[
N_{ {\GL}_4(\CC) \times {\GL}_1(\CC)} = 
\left(
\begin{bmatrix} 
0&1&0&0 \\  
0&0&0&0 \\  
0&0&0&0 \\  
0&0&0&0
\end{bmatrix}, 0 \right) 
\overset{\eqref{indentitygspin6}}{\Longleftrightarrow} 
N_{{\GSO}_6(\CC)} =
\begin{bmatrix} 
0&0&0&0&0&0 \\ 
0&0&1&0&0&0 \\  
0&0&0&0&0&0 \\ 
0&0&0&0&-1&0 \\ 
0&0&0&0&0&0 \\ 
0&0&0&0&0&0 
\end{bmatrix},  
\]
and for \eqref{nongenericA4} we have 
\[
N_{ {\GL}_4(\CC) \times {\GL}_1(\CC)} = 
\left(
\begin{bmatrix} 
0&0&0&0 \\  
0&0&1&0 \\  
0&0&0&0 \\  
0&0&0&0
\end{bmatrix}, 0 \right) 
\overset{\eqref{indentitygspin6}}{\Longleftrightarrow} 
N_{{\GSO}_6(\CC)} =
\begin{bmatrix} 
0&1&0&0&0&0 \\ 
0&0&0&0&0&0 \\ 
0&0&0&0&0&0 \\ 
0&0&0&0&0&0 \\ 
0&0&0&0&0&-1 \\ 
0&0&0&0&0&0 
\end{bmatrix}. 
\]

Finally, in the case where we have three pairs we are in the situation of \eqref{4gl1}. 
Then we have the following seven non-generic constituents:  
\begin{eqnarray}
Q\left( [\nu^{3/2} \tchi], [\nu^{1/2} \tchi], [\nu^{-1/2} \tchi], [\nu^{-3/2} \tchi] \right) & =  \tchi \circ \det; 
\label{nongenericA5} \\
Q\left( [\nu^{1/2} \tchi, \nu^{3/2} \tchi], [\nu^{-1/2} \tchi], [\nu^{-3/2} \tchi] \right); &
\label{nongenericA6} \\
Q\left( [\nu^{3/2} \tchi], [\nu^{-1/2} \tchi, \nu^{1/2} \tchi], [\nu^{-3/2} \tchi] \right); & 
\label{nongenericA7} \\
Q\left( [\nu^{3/2} \tchi], [\nu^{1/2} \tchi], [\nu^{-3/2} \tchi, \nu^{-1/2} \tchi] \right); & 
\label{nongenericA8} \\
Q\left( [\nu^{1/2} \tchi, \nu^{3/2} \tchi], [\nu^{-3/2} \tchi, \nu^{-1/2} \tchi] \right); & 
\label{nongenericA9} \\ 
Q\left( [\nu^{-1/2} \tchi, \nu^{1/2} \tchi, \nu^{3/2} \tchi], [\nu^{-3/2} \tchi] \right); & 
\label{nongenericA10} \\
Q\left( [\nu^{3/2} \tchi], [\nu^{-3/2} \tchi, \nu^{-1/2} \tchi, \nu^{1/2} \tchi] \right). 
& 
\label{nongenericA11} 
\end{eqnarray}
For \eqref{nongenericA5} we have 
\[ 
N_{ {\GL}_4(\CC) \times {\GL}_1(\CC)} = 
\left( 0_{4\times4},  
0 \right) 
\overset{\eqref{indentitygspin6}}{\Longleftrightarrow} 
N_{{\GSO}_6(\CC)} = 0_{6\times6}, 
\]  
for \eqref{nongenericA6} we have 
\[
N_{ {\GL}_4(\CC) \times {\GL}_1(\CC)} = 
\left(
\begin{bmatrix} 
0&1&0&0 \\  
0&0&0&0 \\  
0&0&0&0 \\  
0&0&0&0
\end{bmatrix}, 0 \right) 
\overset{\eqref{indentitygspin6}}{\Longleftrightarrow} 
N_{{\GSO}_6(\CC)} =
\begin{bmatrix} 
0&0&0&0&0&0 \\ 
0&0&1&0&0&0 \\ 
0&0&0&0&0&0 \\ 
0&0&0&0&-1&0 \\ 
0&0&0&0&0&0 \\ 
0&0&0&0&0&0 
\end{bmatrix}, 
\]
for \eqref{nongenericA7} we have 
\[
N_{ {\GL}_4(\CC) \times {\GL}_1(\CC)} = 
\left(
\begin{bmatrix} 
0&0&0&0 \\  
0&0&1&0 \\  
0&0&0&0 \\  
0&0&0&0
\end{bmatrix}, 0 \right) 
\overset{\eqref{indentitygspin6}}{\Longleftrightarrow} 
N_{{\GSO}_6(\CC)} =
\begin{bmatrix} 
0&1&0&0&0&0 \\ 
0&0&0&0&0&0 \\ 
0&0&0&0&0&0 \\ 
0&0&0&0&0&0 \\ 
0&0&0&0&0&-1 \\ 
0&0&0&0&0&0 
\end{bmatrix}, 
\]
for \eqref{nongenericA8} we have 
\[
N_{ {\GL}_4(\CC) \times {\GL}_1(\CC)} = 
\left(
\begin{bmatrix} 
0&0&0&0 \\  
0&0&0&0 \\  
0&0&0&1 \\  
0&0&0&0
\end{bmatrix}, 0 \right) 
\overset{\eqref{indentitygspin6}}{\Longleftrightarrow} 
N_{{\GSO}_6(\CC)} =
\begin{bmatrix} 
0&0&0&0&0&0 \\ 
0&0&0&1&0&0 \\ 
0&0&0&0&-1&0 \\ 
0&0&0&0&0&0 \\ 
0&0&0&0&0&0 \\ 
0&0&0&0&0&0 
\end{bmatrix}, 
\]
for \eqref{nongenericA9} we have 
\[
N_{ {\GL}_4(\CC) \times {\GL}_1(\CC)} = 
\left(
\begin{bmatrix} 
0&1&0&0 \\  
0&0&0&0 \\  
0&0&0&1 \\  
0&0&0&0
\end{bmatrix}, 0 \right) 
\overset{\eqref{indentitygspin6}}{\Longleftrightarrow} 
N_{{\GSO}_6(\CC)} =
\begin{bmatrix} 
0&0&0&0&0&0 \\ 
0&0&1&1&0&0 \\ 
0&0&0&0&-1&0 \\ 
0&0&0&0&-1&0 \\ 
0&0&0&0&0&0 \\ 
0&0&0&0&0&0 
\end{bmatrix}, 
\]
for \eqref{nongenericA10} we have 
\[
N_{ {\GL}_4(\CC) \times {\GL}_1(\CC)} = 
\left(
\begin{bmatrix} 
0&1&0&0 \\  
0&0&1&0 \\  
0&0&0&0 \\  
0&0&0&0
\end{bmatrix}, 0 \right) 
\overset{\eqref{indentitygspin6}}{\Longleftrightarrow} 
N_{{\GSO}_6(\CC)} =
\begin{bmatrix} 
0&1&0&0&0&0 \\ 
0&0&1&0&0&0 \\ 
0&0&0&0&0&0 \\ 
0&0&0&0&-1&0 \\ 
0&0&0&0&0&-1 \\ 
0&0&0&0&0&0 
\end{bmatrix}, 
\]
and for \eqref{nongenericA11} we have 
\[
N_{ {\GL}_4(\CC) \times {\GL}_1(\CC)} = 
\left(
\begin{bmatrix} 
0&0&0&0 \\  
0&0&1&0 \\  
0&0&0&1 \\  
0&0&0&0
\end{bmatrix}, 0 \right) 
\overset{\eqref{indentitygspin6}}{\Longleftrightarrow} 
N_{{\GSO}_6(\CC)} =
\begin{bmatrix} 
0&1&0&0&0&0 \\ 
0&0&0&1&0&0 \\ 
0&0&0&0&-1&0 \\ 
0&0&0&0&0&0 \\ 
0&0&0&0&0&-1 \\ 
0&0&0&0&0&0 
\end{bmatrix}.  
\]

\item $\bM \s \GL_2 \times \GL_1 \times \GL_1$ and 
$\tbM = ( \GL_2 \times \GL_1 \times \GL_1) \times \GL_1.$ 

Given $\sigma_0 \in \Irr(\GL_2)$ and $\chi_1,\chi_2 \in (F^\times)^D$, we consider 
\begin{equation} \label{ps gspin6 II non}
i_{M}^{\GSpin_6}(\sigma_0 \boxtimes \chi_1 \boxtimes \chi_2).
\end{equation}
Write  
\[
\sigma_0 \boxtimes \chi_1 \boxtimes \chi_2 
= (\tsigma_0 \boxtimes \tchi_1 \boxtimes \tchi_2 \boxtimes \teta)|_{M}
\]
with $\tsigma_0 \in \Irr(\GL_2)$ and $\tchi_i, \teta \in (F^\times)^D$.  
By \eqref{case 2}, it follows that  \eqref{ps gspin6 II non} contains a non-generic constituent 
if and only if its lift 
\begin{equation} \label{ps gl1x4 II non} 
i_{\tM}^{\GL_4 \times GL_1} (\tsigma_0 \boxtimes \tchi_1 \boxtimes \tchi_2 \boxtimes \teta)
\end{equation}
contains a non-generic constituent if and only if 
\begin{equation} \label{ps gl4 II non} 
i_{\GL_2 \times \GL_1 \times \GL_1}^{GL_4} 
(\tsigma_0 \boxtimes \tchi_1 \boxtimes \tchi_2)
\end{equation}
does. 
Recalling $\mathfrak{nongnr}$-(A), it is sufficient to consider the case of 
$\tsigma_0 \in \Irr(\GL_2)$, $\tchi_1 = \nu^{1/2} \tchi$, and $\tchi_2 = \nu^{-1/2} \tchi$ 
for $\tchi \in (F^\times)^D$,  where the segment $\Delta_{\tsigma_0}$ of $\tsigma_0$ 
does not precede either $\tchi_1$ or $\tchi_2$.  
We then have the following sole non-generic constituent:
\begin{equation} \label{nongenericB}
Q([\Delta_{\tsigma_0}], [\nu^{1/2} \tchi], [\nu^{-1/2} \tchi]).
\end{equation}
We have
\[
N_{ {\GL}_4(\CC) \times {\GL}_1(\CC)} = 
\left(
0_{4\times4}, 
0 \right) 
\overset{\eqref{indentitygspin6}}{\Longleftrightarrow} 
N_{{\GSO}_6(\CC)} = 0_{6\times6}. 
\]

\item $\bM \s \GL_3 \times \GL_1$ and $\tbM = (\GL_3 \times \GL_1) \times \GL_1.$

Given a non-generic $\sigma_0 \in \Irr(\GL_3)$ and any $\chi \in (F^\times)^D$, we consider 
\begin{equation} \label{ps gspin6 III non}
i_{M}^{\GSpin_6}(\sigma_0 \boxtimes \chi).
\end{equation}
Write 
\[
\sigma_0 \boxtimes \chi = (\tsigma_0 \boxtimes \tchi \boxtimes \teta)|_{M}
\]
with non-generic $\tsigma_0 \in \Irr(\GL_3)$ and $\tchi, \teta \in (F^\times)^D.$
As in \eqref{case 3} we have
\[
\tsigma_0 =\sigma_0 \tchi, \quad\mbox{ and } \quad \teta= \chi_2 \tchi^{-2}.
\]

As before, \eqref{ps gspin6 III non} contains a non-generic constituent if and only if its lift 
\begin{equation} \label{ps gl1x4 III non} 
i_{\tM}^{\GL_4 \times GL_1} (\tsigma_0 \boxtimes \tchi \boxtimes \teta)
\end{equation}
also contains one if and only if 
\begin{equation} \label{ps gl4 III non} 
i_{\GL_3 \times \GL_1}^{GL_4} (\tsigma_0 \boxtimes \tchi)
\end{equation}
does. 
To have a non-generic $\tsigma_0$ of $\GL_3(F)$, the irreducible representation $\tsigma_0$ 
must be some constituent in a reducible induction. This case has been covered in 
$\mathfrak{nongnr}$-(A) and (B) above.

\item $\bM \s \GL_1 \times \GSpin_4$ and 
$\tbM = (\GL_2 \times \GL_2) \times \GL_1.$  

Given a non-generic $\sigma_0 \in \Irr(\GSpin_4)$, by Section \ref{nongeneric}, 
we know that it must be of the form
\[
{\Res}^{\GL_2 \times \GL_2}_{\GSpin_4} ((\chi \circ \det) \boxtimes \tsigma) 
\]
for $\tsigma \in \Irr(\GL_2).$
For $ \eta \in (F^\times)^D,$ the induced representation  
\begin{equation} \label{ps gspin6 IV non}
i_{M}^{\GSpin_6}((\chi \circ \det) \boxtimes \tsigma \boxtimes \eta)
\end{equation}
contains a non-generic constituent if and only if so does
\[
i_{\GL_2\times \GL_2}^{\GL_4}((\chi \circ \det) \boxtimes \tsigma), 
\]
which is always the case.
Therefore, if $\tsigma$ is supercuspidal, then 
\[
N_{ {\GL}_4(\CC) \times {\GL}_1(\CC)} = 
\left(0_{4\times4},  
0 \right) 
\overset{\eqref{indentitygspin6}}{\Longleftrightarrow} 
N_{{\GSO}_6(\CC)} =
0_{6\times6}.  
\]
If $\tsigma$ is non-supercuspidal, then it suffices to consider the case 
$\tsigma = \St_{\GL_2} \otimes \eta$ with $\eta \in (F^\times)^D$ 
since the other case has been covered in $\mathfrak{nongnr}$-(A). 
Thus, we have
\[
N_{ {\GL}_4(\CC) \times {\GL}_1(\CC)} = 
\left(
\begin{bmatrix} 
0&0&0&0 \\  
0&0&0&0 \\  
0&0&0&1 \\  
0&0&0&0
\end{bmatrix}, 0 \right) 
\overset{\eqref{indentitygspin6}}{\Longleftrightarrow} 
N_{{\GSO}_6(\CC)} =
\begin{bmatrix} 
0&0&0&0&0&0 \\ 
0&0&0&1&0&0 \\ 
0&0&0&0&-1&0 \\ 
0&0&0&0&0&0 \\ 
0&0&0&0&0&0 \\ 
0&0&0&0&0&0 
\end{bmatrix}.  
\]

\item $\bM \s \GSpin_6$ and $\tbM = \GL_4 \times \GL_1.$

Given a non-generic $\sigma \in \Irr(\GSpin_6),$ it must be of the form 
\begin{equation} \label{ps gspin6 E non}
{\Res}^{\GL_4 \times \GL_1}_{\GSpin_6} 
\left( \tchi \circ \det \boxtimes \teta \right) = \chi \circ \det,
\end{equation}
for some $\tchi, \teta \in (F^\times)^D.$
This is the case $Q([\nu^{3/2} \tchi], [\nu^{1/2} \tchi], [\nu^{-1/2} \tchi], [\nu^{-3/2} \tchi])$ in $\mathfrak{nongnr}$-(A).

\end{enumerate}

\subsection{Computation of the Adjoint L-function for $\GSpin_6$} 
We now give explicit expressions for the adjoint $L$-function of each of the representations of $\GSpin_6(F)$.  
Recall that if we have a parameter $(\phi,N)$ with $N$ a nilpotent matrix on the vector space $V$, 
then its adjoint $L$-function is 
\[ L(s, \phi, \Ad) = 
\det \left( 1 - q^{-s} \Ad(\phi)\vert V_N^I \right)^{-1},  
\] 
where $V_N = \ker(N)$, $V^I$ the vectors fixed by the inertia group, and $V_N^I = V^I \cap V_N$. 
Below for the cases where $N$ is non-zero, we write $\ker(\Ad(N))$ and we use $L_\alpha$ to denote 
the root group associated with the root $\alpha$.

We now consider each case. Using \eqref{Ad-L} and Sections \ref{generic section gspin6}, 
and \ref{nongeneric section gspin6}, we have the following. 

\begin{enumerate} 

\item[$\mathfrak{gnr}$-(a)] 
Given $\sigma \in \Irr_{\rm sc}(\GSpin_6)$,  we have 
$\tsigma = \tsigma_0 \boxtimes \teta \in \Irr_{\rm sc}(\GL_4 \times \GL_1).$ 
Then 
\[
L(s, 1_{F^\times}) L(s, \sigma, \Ad)=L(s, \tsigma_0, \Ad_{\widehat{GL}_4}) 
\]
or 
\[ 
L(s, \sigma, \Ad)=L(s, \tsigma_0, \Ad). 
\]

\item[$\mathfrak{gnr}$-(I)] 
Given $\bM \s \GL_1 \times GL_1 \times \GL_1 \times \GL_1$ and 
$\tbM = (\GL_1 \times \GL_1 \times \GL_1 \times \GL_1) \times \GL_1$, we recall 
\[
i_{\GL_1 \times \GL_1 \times \GL_1 \times \GL_1}^{GL_4} (\tchi_1 \boxtimes \tchi_2 \boxtimes \tchi_3 \boxtimes \tchi_4)
\]
must be irreducible. Thus, given $\sigma \in \Irr(\GSpin_6)$ such that 
\[
\sigma = i_{M}^{\GSpin_6}(\tchi_1 \boxtimes \tchi_2 \boxtimes \tchi_3 \boxtimes \tchi_4), 
\]
we have
\[
L(s, \sigma, \Ad)= L(s)^3 \prod_{i \neq j} L(s, \tchi_i\tchi_j^{-1}).
\]

\item[$\mathfrak{gnr}$-(II)] 
Given $\bM \s \GL_2 \times \GL_1 \times \GL_1$ and 
$\tbM = ( \GL_2 \times \GL_1 \times \GL_1) \times \GL_1$,  
for $\sigma_0 \in \Irr_{\rm esq}(\GL_2)$ and $\chi_1,\chi_2 \in (F^\times)^D$,  
we have an irreducible induced representation 
\[
\sigma = i_{M}^{\GSpin_6}(\sigma_0 \boxtimes \chi_1 \boxtimes \chi_2) 
= {\Res}_{\GSpin_6}^{\GL_4 \times \GL_1}
\left( i_{\GL_2 \times \GL_1 \times \GL_1}^{GL_4} (\tsigma_0 \boxtimes \tchi_1 \boxtimes \tchi_2 \boxtimes \teta) \right),
\]
for some $\tsigma_0 \in \Irr_{\rm esq}(\GL_2)$, and $\tchi_i, \teta \in (F^\times)^D$.  
For supercuspidal $\tsigma_0$ we have
\begin{eqnarray*}
L(s, \sigma, \Ad) & = & 
L(s)^2 L(s, \ts_0, \Ad) L(s, \ts_0\times \tchi_1^{-1}) L(s, \ts_0^\vee \times \tchi_1) \\ 
&&  
L(s, \ts_0\times \tchi_2^{-1}) L(s, \ts_0^\vee\times \tchi_2) L(s, \tchi_1\tchi_2^{-1}) L(s, \tchi_2\tchi_1^{-1}).
\end{eqnarray*}
For non-supercuspidal $\tsigma_0 \in \Irr(\GL_2)$, i.e., 
$\sigma_0 = \St_{\GL_2} \otimes \tchi$ for some $\tchi \in (F^{\times})^D$,  we have 
\begin{equation} \label{N12}
\ker \left( \ad \begin{bmatrix} 
0&1&0&0 \\  
0&0&0&0 \\  
0&0&0&0 \\  
0&0&0&0
\end{bmatrix} \right) 
= 
\left\langle 
\begin{bmatrix} 
a&0&0&0 \\  
0&a&0&0 \\  
0&0&b&0 \\  
0&0&0&c
\end{bmatrix}, 
L_{f_1-f_2}, L_{f_1-f_3}, L_{f_1-f_4}, L_{f_3-f_2}, L_{f_3-f_4}, L_{f_4-f_2}, L_{f_4-f_3} 
\right\rangle.
\end{equation} 
It follows that
\begin{eqnarray*}
L(s, \sigma, \Ad)
& = &
L(s)^2 L(s+1) L(s+1, \tchi \tchi_1^{-1}) L(s+1, \tchi \tchi_2^{-1}) \\ 
&& \cdot 
L(s, \tchi^{-1}\tchi_1) L(s, \tchi^{-1}\tchi_2) L(s, \tchi_1\tchi_2^{-1}) L(s, \tchi_2\tchi_1^{-1}).
\end{eqnarray*}

\item[$\mathfrak{gnr}$-(III)] 
Given $\bM \s \GL_3 \times \GL_1$ and $\tbM = (\GL_3 \times \GL_1) \times \GL_1$, 
for $\sigma_0 \in \Irr_{\rm esq}(\GL_3)$ and $\chi \in (F^\times)^D$,  
we have an irreducible induced representation 
\[
\sigma = i_{M}^{\GSpin_6}(\sigma_0 \boxtimes \chi) 
= {\Res}_{\GSpin_6}^{\GL_4 \times \GL_1}
\left( 
i_{\GL_3 \times \GL_1 \times \GL_1}^{GL_4 \times \GL_1} \left( \tsigma_0 \boxtimes \tchi \boxtimes \teta \right)
\right), 
\]
for $\tsigma_0 \in \Irr_{\rm esq}(\GL_3)$ and $\tchi, \teta \in (F^\times)^D$. 
If $\tsigma_0 \in \Irr_ {\rm esq}(\GL_3)$ is supercuspidal, then we have
\[
L(s, \sigma, \Ad) = L(s) L(s, \ts_0, \Ad) L(s, \ts_0\times \tchi^{-1}) L(s, \ts_0^\vee \times \tchi).
\]
For non-supercuspidal $\tsigma_0 \in \Irr_{\rm esq}(\GL_3),$ i.e., 
$\sigma_0 = \St_{\GL_3} \otimes \tchi_0$ for some $\tchi_0 \in (F^\times)^D,$   we have 
\begin{equation} \label{N1223}
\ker \left( \ad \begin{bmatrix} 
0&1&0&0 \\  
0&0&1&0 \\  
0&0&0&0 \\  
0&0&0&0
\end{bmatrix} \right) 
= 
\left\langle \begin{bmatrix} 
a&c&0&0 \\  
0&a&c&0 \\  
0&0&a&0 \\  
0&0&0&b
\end{bmatrix}, 
L_{f_1-f_3}, L_{f_1-f_4}, L_{f_4-f_3} \right\rangle.
\end{equation}
It follows that
\[ 
L(s, \sigma, \Ad) = 
L(s) L(s+1) L(s+2) L(s+1, \tchi \tchi_0^{-1}) L(s+1, \tchi^{-1} \tchi_0).
\]

\item[$\mathfrak{gnr}$-(IV)] 
Given $\bM \s \GL_1 \times \GSpin_4$ and $\tbM = (\GL_2 \times \GL_2) \times \GL_1$, 
we have the representation \eqref{ps gspin6 IV}
\[
\sigma=i_{M}^{\GSpin_6}( \chi \boxtimes \sigma_0)
\]
with $\sigma_0\in \Irr_{\rm esq}(\GSpin_4)$, and $\chi \in (F^\times)^D$.  
We have the irreducible $i_{\GL_2 \times \GL_2}^{GL_4} (\tsigma_1 \boxtimes \tsigma_2)$ 
as in \eqref{ps gl4 IV}, where
$\chi \boxtimes \sigma_0  \subset (\tsigma_1 \boxtimes \tsigma_2 \boxtimes  \teta)|_{M}$ 
with $\tsigma_i \in \Irr_{\rm esq}(\GL_2), \teta \in (F^\times)^D.$ 
Thus, if $\sigma_0$ is supercuspidal (and hence so are $\tsigma_1$ and $\tsigma_2$) we have 
\[ 
L(s, \sigma, \Ad) = L(s) L(s, \ts_1, \Ad)  L(s, \ts_2, \Ad) 
L(s, \ts_1 \times \ts_2^\vee) L(s, \ts_1^\vee \times \ts_1). 
\]

If $\sigma_0$ is non-supercuspidal, with $\tsigma_1$ supercuspidal and $\tsigma_2$ non-supercuspidal, 
i.e., $\tsigma_2=\St_{\GL_2} \otimes \tchi$ for some $\tchi \in (F^{\times})^D$, we have  
\begin{equation} \label{N34}  
\ker \left( 
\ad \begin{bmatrix} 
0&0&0&0 \\  
0&0&0&0 \\  
0&0&0&1 \\  
0&0&0&0
\end{bmatrix} \right)
= 
\left\langle 
\begin{bmatrix} 
a&0&0&0 \\  
0&b&0&0 \\  
0&0&c&0 \\  
0&0&0&c
\end{bmatrix}, 
L_{f_1-f_2}, L_{f_1-f_4}, L_{f_2-f_1}, L_{f_2-f_4}, L_{f_3-f_1}, L_{f_3-f_2}, L_{f_3-f_4} 
\right\rangle, 
\end{equation} 
and it then follows that
\[ 
L(s, \sigma, \Ad) = 
L(s) L(s+1) L(s, \ts_1, \Ad) 
L(s+\frac{1}{2}, \ts_1^\vee \times \tchi) L(s+\frac{1}{2}, \ts_1 \times \tchi^{-1}). 
\]

If $\sigma_0$ is non-supercuspidal, with $\tsigma_1$ non-supercuspidal and $\tsigma_2$ supercuspidal,  
i.e., $\tsigma_1=\St_{\GL_2} \otimes \tchi$ for some $\tchi \in (F^{\times})^D$, then 
$\ker(\ad(N))$ is as in \eqref{N12} and we have 
\[ 
L(s, \sigma, \Ad) = L(s) L(s+1) 
L(s, \ts_2, \Ad) L(s+\frac{1}{2}, \ts_2^\vee \times \tchi) 
L(s+\frac{1}{2}, \ts_2 \times \tchi^{-1}). 
\]

If both $\tsigma_1$ and $\tsigma_2$ are non-supercuspidal, 
i.e., $\tsigma_i = \St_{\GL_2} \otimes \tchi_i$ with 
$\tchi_1, \tchi_2 \in (F^{\times})^D$ satisfying $\tchi_1 \neq \tchi_2  \nu^{\pm1}$, 
we have  
\begin{equation} \label{N1234}
\ker \left(\ad \begin{bmatrix} 
0&1&0&0 \\  
0&0&0&0 \\  
0&0&0&1 \\  
0&0&0&0
\end{bmatrix} \right) 
= 
\left\langle 
\begin{bmatrix} 
a&0&c&0 \\  
0&a&0&c \\  
d&0&b&0 \\  
0&d&0&b
\end{bmatrix}, 
L_{f_1-f_2}, L_{f_1-f_4}, L_{f_3-f_2}, L_{f_3-f_4} 
\right\rangle,
\end{equation}
and it follows that
\[ 
L(s, \sigma, \Ad) = 
L(s) L(s+1)^2 
L(s+1, \tchi_1\tchi_2^{-1}) 
L(s+1, \tchi_1^{-1} \tchi_2) 
L(s, \tchi_1^{-1}\tchi_2) 
L(s, \tchi_1\tchi_2^{-1}).
\]

\item[$\mathfrak{gnr}$-(V)] 
Given $\bM \s \GL_1 \times \GSpin_4$ and $\tbM = (\GL_2 \times \GL_2) \times \GL_1$, 
we consider $\sigma \in \Irr_{\rm esq}(\GSpin_6)$ and 
$\tsigma \in \Irr_{\rm esq}(\GL_4)$ and  $\teta \in (F^\times)^D$ such that 
$\sigma \subset (\tsigma \boxtimes \teta)|_{M}.$  
Then, $\ts$ must be either \eqref{4gl1} or \eqref{2gl2}.

For \eqref{4gl1} (i.e., $\ts=\St_{\GL_4} \otimes \tchi$), we have 
\begin{equation} \label{N122334} 
\ker \left( \ad \begin{bmatrix} 
0&1&0&0 \\  
0&0&1&0 \\  
0&0&0&1 \\  
0&0&0&0
\end{bmatrix} \right) 
= 
\left\langle 
\begin{bmatrix} 
a&b&c&0 \\  
0&a&b&c \\  
0&0&a&b \\  
0&0&0&a
\end{bmatrix}, L_{f_1-f_4} 
\right\rangle,
\end{equation} 
and it follows that
\[
L(s, \sigma, \Ad)= L(s+3)L(s+2)L(s+1). 
\]

For \eqref{2gl2} 
(i.e., $\ttau \in \Irr_{\rm sc}(\GL_2)$), 
we have 
\begin{equation} \label{N1324}  
\ker \left(\ad \begin{bmatrix} 
0&0&1&0 \\  
0&0&0&1 \\  
0&0&0&0 \\  
0&0&0&0
\end{bmatrix} \right) 
= 
\left\langle 
\begin{bmatrix} 
a&c&0&0 \\  
d&b&0&0 \\  
0&0&a&c \\  
0&0&d&b
\end{bmatrix}, L_{f_1-f_3}, L_{f_1-f_4}, L_{f_2-f_3}, L_{f_2-f_4} 
\right\rangle,
\end{equation} 
and it follows that
\[ 
L(s, \sigma, \Ad) = L(s, \ttau, \Ad) L(s, \ttau \times \ttau^\vee).
\]

\item[$\mathfrak{nongnr}$-(A)]

For $Q([\nu^{1/2} \tchi], [\nu^{-1/2} \tchi], [\tchi_3], [\tchi_4]) $ \eqref{nongenericA1}, we have
\begin{eqnarray*} 
L(s, \sigma, \Ad) & = & 
L(s)^3 L(s+1) L(s-1) 
L(s, \tchi_3\tchi_4^{-1}) L(s, \tchi_3^{-1}\tchi_4)   
\\
&& 
\prod_{i=3,4} \left( 
L(s+\frac{1}{2}, \tchi\tchi_i^{-1}) L(s-\frac{1}{2}, \tchi^{-1}\tchi_i) 
L(s-\frac{1}{2}, \tchi\tchi_i^{-1}) L(s+\frac{1}{2}, \tchi^{-1}\tchi_i) \right)
\end{eqnarray*} 
For $Q\left( [\nu \tchi], [ \tchi], [ \nu^{-1} \tchi], [\tchi_4] \right)$ \eqref{nongenericA2}, we have 
\[
L(s, \sigma, \Ad) = 
L(s)^3 
L(s+1)^2 L(s-1)^2 
L(s+2) L(s-2) 
\prod_{t=0,1,-1} \left( L(s+t, \tchi\tchi_4^{-1})L(s+t, \tchi^{-1}\tchi_4) \right),
\]
For $ Q([ \tchi, \nu \tchi], [ \nu^{-1} \tchi], [\tchi_4])$ \eqref{nongenericA3}, we have 
$\ker(\ad(N))$ as in \eqref{N12} and 
\[
L(s, \sigma, \Ad) = 
L(s)^2 L(s-1)^2 L(s-2) 
\prod\limits_{t=-1,0} L(s+t, \tchi \tchi_4^{-1}) 
\prod\limits_{t=\pm1}L(s+t, \tchi^{-1}\tchi_4). 
\]
For $Q\left( [\nu \tchi], [ \tchi, \nu^{-1} \tchi], [\tchi_4] \right)$ \eqref{nongenericA4}, since
\begin{equation} \label{N23} 
\ker \left( \ad \begin{bmatrix} 
0&0&0&0 \\  
0&0&1&0 \\  
0&0&0&0 \\  
0&0&0&0
\end{bmatrix} \right) 
= 
\left\langle 
\begin{bmatrix} 
a&0&0&0 \\  
0&b&0&0 \\  
0&0&b&0 \\  
0&0&0&c
\end{bmatrix}, 
L_{f_1-f_3}, L_{f_1-f_4}, L_{f_2-f_1}, L_{f_2-f_3}, L_{f_2-f_4}, L_{f_4-f_1}, L_{f_4-f_3} 
\right\rangle,
\end{equation}
we have 
\[ 
L(s, \sigma, \Ad) = 
L(s)^2 L(s+2) L(s-1) L(s+1) 
\prod\limits_{t=0, 1} L(s+t, \tchi\tchi_4^{-1}) 
\prod_{t=\pm 1}L(s+t, \tchi^{-1}\tchi_4).  
\]
For $Q([\nu^{3/2} \tchi], [\nu^{1/2} \tchi], [\nu^{-1/2} \tchi], [\nu^{-3/2} \tchi])$ \eqref{nongenericA5}, 
we have 
\[ 
L(s, \sigma, \Ad)= L(s)^3 L(s+1)^3 L(s-1)^3 L(s+2)^2 L(s-2)^2 L(s+3) L(s-3).
\]
For $Q\left( [\nu^{1/2} \tchi, \nu^{3/2} \tchi], [\nu^{-1/2} \tchi], [\nu^{-3/2} \tchi] \right) $ 
\eqref{nongenericA6}, 
we have $\ker(\ad(N))$ is as in \eqref{N12} and 
\[
L(s, \sigma, \Ad)= L(s)^2 L(s-1)^2 L(s+1)^2 L(s-2) L(s+2) L(s-3).
\] 
For $ Q([\nu^{3/2} \tchi], [\nu^{-1/2} \tchi, \nu^{1/2} \tchi], [\nu^{-3/2} \tchi])$ \eqref{nongenericA7}, 
we have $\ker(\ad(N))$ is as in \eqref{N23} and 
\[
L(s, \sigma, \Ad) = L(s)^2 L(s+1)^2L(s+2)L(s-1)^2L(s-3)L(s-2).
\]
For $Q([\nu^{3/2} \tchi], [\nu^{1/2} \tchi], [\nu^{-3/2} \tchi, \nu^{-1/2} \tchi])$ \eqref{nongenericA8}, 
we have $\ker(\ad(N))$ is as in \eqref{N34} and 
\[
L(s, \sigma, \Ad)=  L(s)^2 L(s+1)^2 L(s-1)^2 L(s-2) L(s+2) L(s-3).
\]
For $Q([\nu^{1/2} \tchi, \nu^{3/2} \tchi], [\nu^{-3/2} \tchi, \nu^{-1/2} \tchi])$ \eqref{nongenericA9}, 
we have $\ker(\ad(N))$ is as in \eqref{N1234} and 
\[
L(s, \sigma, \Ad)= L(s) L(s-1)^2 L(s+1) L(s+2) L(s-2) L(s-3).
\]
For $Q([\nu^{-1/2} \tchi, \nu^{1/2} \tchi, \nu^{3/2} \tchi], [\nu^{-3/2} \tchi])$ \eqref{nongenericA10}, 
we have $\ker(\ad(N))$ is as in \eqref{N1223} and 
\[
L(s, \sigma, \Ad)= L(s) L(s-1) L(s-2) L(s+1) L(s-3).
\]
Finally, for $Q\left( [\nu^{3/2} \tchi], [\nu^{-3/2} \tchi, \nu^{-1/2} \tchi, \nu^{1/2} \tchi] \right)$ \eqref{nongenericA11}, 
since
\begin{equation} \label{N2334} 
\ker \left( \ad \begin{bmatrix} 
0&0&0&0 \\  
0&0&1&0 \\  
0&0&0&1 \\  
0&0&0&0
\end{bmatrix} \right) 
= 
\left\langle 
\begin{bmatrix} 
a&0&0&0 \\  
0&b&c&0 \\  
0&0&b&c \\  
0&0&0&b
\end{bmatrix}, 
L_{f_1-f_4}, L_{f_2-f_1}, L_{f_2-f_4} 
\right\rangle,
\end{equation}
we have 
\[ 
L(s, \sigma, \Ad) = 
L(s) L(s+1) L(s-1) L(s-2) L(s-3).  
\]

\item[$\mathfrak{nongnr}$-(B)]
For $Q([\Delta_{\tsigma_0}], [\nu^{1/2} \tchi], [\nu^{-1/2} \tchi])$ \eqref{nongenericB}, 
with say $[\Delta_{\tsigma_0}] = 
i_{\GL_1 \times \GL_1}^{\GL_2}(\teta_1 \boxtimes \teta_2),~\teta_1\teta_2^{-1} \neq \nu^{\pm1}$ 
we have 
\begin{eqnarray*} 
L(s, \sigma, \Ad) 
& = &
L(s)^3 L(s+1) L(s-1)
L(s, \teta_1\teta_2^{-1}) L(s, \teta_1^{-1}\teta_2) 
\\ 
&  &  
\prod_{i=1,2} 
\left( 
L(s-\frac{1}{2}, \teta_i\tchi^{-1}) 
L(s+\frac{1}{2}, \teta_i\tchi^{-1})  
L(s+\frac{1}{2}, \teta_i^{-1}\tchi) 
L(s-\frac{1}{2}, \teta_i^{-1}\tchi) 
\right).  
\end{eqnarray*}

\item[$\mathfrak{nongnr}$-(C)] 
As mentioned before, all the possibilities in this case were covered in (A) and (B) above.

\item[$\mathfrak{nongnr}$-(D)]
For \eqref{ps gspin6 IV non} with $\tsigma$ supercuspidal, we have
\begin{eqnarray*} 
L(s, \sigma, \Ad) 
& = & 
L(s)^2 L(s+1) L(s-1) L(s, \sigma, \Ad) \\ 
& & 
L(s-\frac{1}{2}, \sigma \times \chi^{-1})
L(s+\frac{1}{2}, \sigma \times \chi^{-1})
L(s-\frac{1}{2}, \sigma^\vee \times \chi)
L(s+\frac{1}{2}, \sigma^\vee \times \chi), 
\end{eqnarray*} 
For \eqref{ps gspin6 IV non} with non-supercuspidal 
$\tsigma=\St_{\GL_2} \otimes \eta,~ \eta \in (F^\times)^D$ 
we have 
$\ker(\ad(N))$ as in \eqref{N34} and 
\begin{eqnarray*} 
L(s, \sigma, \Ad) 
& = & 
L(s)^2 L(s+1)^2 L(s-1) 
L(s, \chi\eta^{-1}) 
L(s+1, \chi\eta^{-1}) 
L(s+1, \chi^{-1}\eta) 
L(s, \chi^{-1}\eta). 
\end{eqnarray*} 
Recall that the remaining possibilities in this case were already covered in (A) above.

\item[$\mathfrak{nongnr}$-(E)]
Finally, as mentioned before, all the possibilities in this case we also covered in (A). 

\end{enumerate}

\section{Correction to \cite{acgspin}} \label{sec:corrections}

We take this opportunity to correct the following errors in our earlier work \cite{acgspin}.  They do not affect the main 
results in that paper.
\subsection{Proposition 5.5 and 6.4}
\begin{itemize}
\item Change ``1,2,4,8, if $p=2$" to ``1,2,4,8,..., $2^{[F:\QQ_2]+2},$ if $p=2.$"  
We have $2^{[F:\QQ_p]+2}$ due to the fact that 
$\left| F^{\times} / (F^{\times})^2 \right| 
= 2^{[F:\QQ_2]+2}$. 
\item For  Proposition 5.5, using \cite[Corollary 7.7]{grossprasad92}, it follows that the case of $p=2$ is bounded by $|(\ZZ/2\ZZ)^{4-1}|=8.$ Here $4$ is coming from $\widehat{\GSpin_4}=\GSO(4,\CC).$
\item For  Proposition 6.4, using \cite[Corollary 7.7]{grossprasad92}, it follows that the case of $p=2$ is bounded by $|(\ZZ/2\ZZ)^{6-1}|=32.$ Here $6$ is coming from $\widehat{\GSpin_6}=\GSO(6,\CC).$
\end{itemize}
\subsection{Remark 5.11}
\begin{itemize}
\item 
The formula (5.13) should read as follows: 
\begin{equation}\tag{5.13} 
\Big| \, \Pi_{\vp}\left( \GSpin_4 \right) \Big| = 
\Big| \, \Pi_{\vp}( \GSpin_4^{1,1} ) \Big| = 4, 
\quad \quad 
\left| \, \Pi_{\vp}\left( \GSpin_4^{2,1} \right) \right| = 1.  
\end{equation}
Also, in the following sentence change ``in which case the multiplicity is 2'' to ``in which case the multiplicity 2 
could also occur".  
We thank Hengfei Lu \cite{lu20} for bringing this error to our attention. 

\item In addition, it is more accurate that we use `not irreducible' rather than `reducible' in this Remark 
since one may have indecomposable parameters.  
Alternatively, we may write $\tvp_i|_{W_F}$ is reducible. Thus, at the beginning the Remark, change 
``When $\tvp_i$ is reducible,'' to ``When $\tvp_i$ is not irreducible,''.
\end{itemize}

\begin{table}[p]
\centering
\caption{Representations of $\GSpin_4(F)$}
\label{g4-maintable}
\vspace{-3ex}
$$
\renewcommand{\arraystretch}{1.4}
\begin{array}{|c|l|l|c|}
 \hline&\mbox{${\Res}^{\GL_2 \times \GL_2}_{\GSpin_4}$ of}&\mbox{$L$-packet Structure}
  &\mbox{generic}\\ 
  \hline \hline
\mbox{(a)}& (\ts_1 \boxtimes \ts_2), \quad \ts_2 \s \ts_1\teta, \ts_i \in \Irr_{\rm sc}(\GL_2)
  &\{1\}, \ZZ/2\ZZ, (\ZZ/2\ZZ)^2
  &\bullet 
  \\ 
  \hline
  
\mbox{(b)}& (\ts_1 \boxtimes \ts_2), \quad \ts_2 \not\s  \ts_1\teta, \ts_i \in \Irr_{\rm sc}(\GL_2)
  &\{1\}, \ZZ/2\ZZ 
  &\bullet \\ 
  \hline
  
\mbox{(i)}&   ({\St}_{\GL_2} \boxtimes {\St}_{\GL_2})
= {\St}_{\GSpin_4} \quad\mbox{(irreducible)} 
  &\{1\}  
  &\bullet \\ 
  \hline
  
\mbox{(ii)}&  (i_{\GL_1 \times \GL_1}^{\GL_2}(\chi_{\GL_1 \times \GL_1}^{\GL_2}(\chi_1 \otimes \chi_2) \boxtimes {\St}_{\GL_2} \otimes \chi)\quad
  \mbox{(irreducible)} 
  &\{1\}  
  &\bullet \\ 
  \hline

\mbox{(iii)}&   (i_{\GL_1 \times \GL_1}^{\GL_2}(\chi_1 \otimes \chi_2) \boxtimes i_{\GL_1 \times \GL_1}^{\GL_2}(\chi_3 \otimes \chi_4)), 
  \chi_1 \neq  \nu^{\pm1 \chi_2}, \chi_3 \neq \nu^{\pm1} \chi_4  
  &\{1\}, \ZZ/2\ZZ  
  &\bullet \\ 
  \hline

\mbox{(iv)}&   (\ts \boxtimes {\St}_{\GL_2} \otimes \chi), \quad \ts \in \Irr_{\rm sc}(\GL_2)\quad
  \mbox{(irreducible)} 
  &\{1\}  
  &\bullet \\ 
  \hline

\mbox{(v)}&  (\ts \boxtimes i_{\GL_1 \times \GL_1}^{\GL_2}(\chi_1 \otimes \chi_2)), \quad \ts \in \Irr_{\rm sc}(\GL_2) 
  &\{1\}, \ZZ/2\ZZ
  &\bullet \\ 
  \hline

\mathfrak{nongnr}&   (\chi \circ \det \boxtimes \tsigma), \quad \ts \in \Irr(\GL_2) \quad
  \mbox{(irreducible)} 

  &\{1\} 
  & \\ 
  \hline
    
\end{array}
$$
\end{table}

\begin{table}[p]
\centering
\caption{The adjoint $L$-function $L(s,\sigma,\Ad)$ for $\GSpin_4$}
\label{g4-polestable}
\vspace{-3ex}
$$
\renewcommand{\arraystretch}{1.4}
\begin{array}{|c|l|c|}
 \hline&L(s,\sigma, \Ad)&\operatorname{ord}_{s=1}\\\hline\hline

\mbox{(a)\&(b)} 
&  
L(s, \tsigma_1, \Sym^2 \otimes \omega_{\ts_1}^{-1}) 
L(s, \tsigma_2, \Sym^2 \otimes \omega_{\ts_2}^{-1})
& 0
\\\hline

\mbox{(i)} 
& 
L(s+1)^2
& 0
\\\hline
  
\mbox{(ii)} & 
L(s) L(s+1) L(s, \chi_1\chi_2^{-1})L(s, \chi_1^{-1}\chi_2)
& 0
\\ 
\hline

\mbox{(iii)} 
& 
L(s)^2 
L(s, \chi_1\chi_2^{-1}) 
L(s, \chi_1^{-1}\chi_2) 
L(s, \chi_3\chi_4^{-1}) 
L(s, \chi_3^{-1}\chi_4)
& 0
\\\hline

\mbox{(iv)} 
& 
L(s+1) 
L(s, \tsigma_2, \Sym^2 \otimes \omega_{\ts_2}^{-1})
& 0
\\\hline

\mbox{(v)} & 
L(s)  
L(s, \chi_1\chi_2^{-1}) L(s, \chi_1^{-1}\chi_2) 
L(s, \tsigma_2, \Sym^2 \otimes \omega_{\ts_2}^{-1}) 
& 0
\\ 
\hline

\mathfrak{nongnr}& L(s-1) L(s) L(s+1) L(s, \tsigma, \Ad) 
  & 
1 + \operatorname{ord}_{s=1} L(s, \tsigma, \Ad)
\\ 
\hline
  
\end{array}
$$
\end{table}
\begin{table}[h]
\centering
\caption{Representations of $\GSpin_6(F)$}
\label{maintable}
\vspace{-3ex}
$$
\renewcommand{\arraystretch}{1.5}
\begin{array}{|c|l|c|}
 \hline&\mbox{${\Res}^{\GL_4 \times \GL_1}_{\GSpin_6}$ of}
  &\mbox{generic}\\\hline\hline
\mbox{(a)} 
& 
(\tsigma_0 \boxtimes \teta), \quad \tsigma_0 \in \Irr_{\rm sc}(\GL_4) 
  &\bullet\\\hline

\mbox{(I)}& i_{(\GL_1 \times \GL_1 \times \GL_1 \times \GL_1) \times \GL_1}^{\GL_4 \times GL_1} 
(\tchi_1 \boxtimes \tchi_2 \boxtimes \tchi_3 \boxtimes \tchi_4 \boxtimes \teta) , \quad \tchi_i \neq \nu \tchi_j 
  &\bullet\\\hline

\mbox{(II)}& 
i_{( \GL_2 \times \GL_1 \times \GL_1) \times \GL_1}^{\GL_4 \times GL_1} 
(\tsigma_0 \boxtimes \tchi_1 \boxtimes \tchi_2 \boxtimes \teta) , \quad 
\tsigma_0 \in \Irr_{\rm esq}(\GL_2),  \tchi_1 \neq \nu^{\pm1} \tchi_2 
  &\bullet\\\hline

\mbox{(III)}& i_{(\GL_3 \times \GL_1) \times \GL_1}^{\GL_4 \times GL_1} 
(\tsigma_0 \boxtimes \tchi \boxtimes \teta) , \quad \tsigma_0 \in \Irr_{\rm esq}(\GL_3)
  &\bullet\\\hline  

\mbox{(IV)}& i_{(\GL_2 \times \GL_2) \times \GL_1}^{\GL_4 \times GL_1} 
(\tsigma_1 \boxtimes \tsigma_2 \boxtimes  \teta) , \quad 
\tsigma_i \in \Irr_{\rm esq}(\GL_2), \tsigma_1 \neq \nu^{\pm1} \tsigma_2 
  &\bullet\\\hline

\mbox{(V)}&  (\tsigma \boxtimes \teta) , \quad \tsigma \in \Irr_{\rm esq}(\GL_4) \setminus \Irr_{\rm sc}(\GL_4)
  &\bullet\\\hline

\mbox{(A)}& i_{(\GL_1 \times \GL_1 \times \GL_1 \times \GL_1) \times \GL_1}^{\GL_4 \times GL_1} 
(\tchi_1 \boxtimes \tchi_2 \boxtimes \tchi_3 \boxtimes \tchi_4 \boxtimes \teta) , \quad \tchi_i = \nu \tchi_j 
  & \\\hline

\mbox{(B)}& i_{( \GL_2 \times \GL_1 \times \GL_1) \times \GL_1}^{\GL_4 \times GL_1} 
(\tsigma_0 \boxtimes \tchi_1 \boxtimes \tchi_2 \boxtimes \teta) , \quad 
\tsigma_0 \not\in \Irr_{\rm esq}(\GL_2), \mbox{ or }  \tchi_1 = \nu^{\pm1} \tchi_2 
  & \\ 
  \hline

\mbox{(C)}& i_{(\GL_3 \times \GL_1) \times \GL_1}^{\GL_4 \times GL_1} 
(\tsigma_0 \boxtimes \tchi \boxtimes \teta) , \quad \mbox{non-generic } \tsigma_0  \in \Irr(\GL_3)
  & \\ 
  \hline  

\mbox{(D)}& i_{(\GL_2 \times \GL_2) \times \GL_1}^{\GL_4 \times GL_1} 
((\chi \circ \det) \boxtimes \tsigma \boxtimes  \teta) , \quad \tsigma \in \Irr(\GL_2)
  & \\ 
  \hline

\mbox{(E)}&  (\tchi \circ \det \boxtimes \teta) , \quad 
\tsigma \in \Irr_{\rm esq}(\GL_4) \setminus \Irr_{\rm sc}(\GL_4)
  & \\\hline

\end{array}
$$
\end{table}

\begin{table}[h]

\caption{The adjoint $L$-function $L(s,\sigma,\Ad)$ for $\GSpin_6$}
\label{polestable}
\vspace{-4ex}
$$
\renewcommand{\arraystretch}{1}
\hspace*{-1cm} 
\begin{array}{|c|l|l|c|}
 \hline 
 & 
 \sigma \in \Irr(\GSpin_6(F)) \mbox{ determined by } 
& 
\begin{array}{l}
L(s,\sigma, \Ad) 
\end{array}
& 
\mbox{ord}_{s=1} 
 \\
 \hline\hline

\mbox{(a)} 
& 
\eqref{sc gl4} \, 
\tsigma_0 \in \Irr_{\rm sc}(\GL_4)  
&  
\begin{array}{l}
L(s, \tsigma_0, \Ad)
\end{array}
& 0
\\ 
\hline

\mbox{(I)} 
& 
\eqref{ps gl4} \, 
\tchi_1 \boxtimes \tchi_2 \boxtimes \tchi_3 \boxtimes \tchi_4 \boxtimes \teta
& 
\begin{array}{l}
L(s)^3 \prod_{i \neq j} L(s, \tchi_i\tchi_j^{-1})
\end{array}
& 
0
\\ 
\hline
  
\mbox{(II)} 
& 
\eqref{ps gl4 II} \,  
\tsigma_0 \in \Irr_{\rm sc}(\GL_2) 
&  
\begin{array}{l} 
L(s)^2 L(s, \ts_0, \Ad) 
L(s, \ts_0\times \tchi_1^{-1}) L(s, \ts_0^\vee \times \tchi_1) \\ 
L(s, \ts_0\times \tchi_2^{-1}) L(s, \ts_0^\vee\times \tchi_2) 
L(s, \tchi_1\tchi_2^{-1}) L(s, \tchi_2\tchi_1^{-1})
\end{array}
& 
0
\\ 
\hline

\mbox{(II)} 
& 
\eqref{ps gl4 II} \,  
\tsigma_0 = \St_{\GL_2} \otimes \tchi 
& 
\begin{array}{l}
L(s)^2 L(s+1) L(s+1, \tchi \tchi_1^{-1}) L(s+1, \tchi \tchi_2^{-1}) \\ 
L(s, \tchi^{-1}\tchi_1)L(s, \tchi^{-1}\tchi_2) L(s, \tchi_1\tchi_2^{-1})L(s, \tchi_2\tchi_1^{-1})
\end{array}
& 
0
\\ 
\hline

\mbox{(III)} 
& 
\eqref{ps gl4 III} \, 
\tsigma_0 \in \Irr_{\rm sc}(\GL_3) 
& 
\begin{array}{l}
L(s) L(s, \ts_0, \Ad) L(s, \ts_0\times \tchi^{-1}) L(s, \ts_0^\vee \times \tchi)
\end{array}
& 
0
\\ 
\hline

\mbox{(III)} 
& 
\eqref{ps gl4 III} \, 
\tsigma_0 = \St_{\GL_3} \otimes \tchi_0 
& 
\begin{array}{l} 
L(s) L(s+1) L(s+2) 
L(s+1, \tchi \tchi_0^{-1}) 
L(s+1, \tchi^{-1} \tchi_0)
\end{array}
& 0
\\\hline

\mbox{(IV)} 
& 
\eqref{ps gl4 IV} \, 
\tsigma_i \in \Irr_{\rm sc}(\GL_2) 
& 
\begin{array}{l} 
L(s) L(s, \ts_1, \Ad)  L(s, \ts_2, \Ad) \\ 
L(s, \ts_1 \times \ts_2^\vee)  L(s, \ts_1^\vee \times \ts_1)
\end{array} 
& 
0
\\ 
\hline

\mbox{(IV)} 
& 
\eqref{ps gl4 IV} \, 
\tsigma_1 \in \Irr_{\rm sc}(\GL_2), \tsigma_2 = \St_{\GL_2} \otimes \tchi 
&   
\begin{array}{l} 
L(s) L(s+1) L(s, \ts_1, \Ad) \\ 
L(s+\frac{1}{2}, \ts_1^\vee \times \tchi) L(s+\frac{1}{2}, \ts_1 \times \tchi^{-1})
\end{array} 
& 
0
\\ 
\hline
\mbox{(IV)} 
& 
\eqref{ps gl4 IV} \, 
\tsigma_2 \in \Irr_{\rm sc}(\GL_2), \tsigma_1 = \St_{\GL_2} \otimes \tchi 
& 
\begin{array}{l}
L(s) L(s+1) L(s, \ts_2, \Ad) \\
L(s+\frac{1}{2}, \ts_2^\vee \times \tchi) L(s+\frac{1}{2}, \ts_2 \times \tchi^{-1})
\end{array}
& 
0
\\ 
\hline
\mbox{(IV)} 
&
\eqref{ps gl4 IV} \, 
\tsigma_1= \St_{\GL_2} \otimes \tchi_1 \tsigma_2 = \St_{\GL_2} \otimes \tchi_2 
&  
\begin{array}{l}
L(s) L(s+1)^2 
L(s, \tchi_1^{-1}\tchi_2) L(s, \tchi_1\tchi_2^{-1})
\\
L(s+1, \tchi_1\tchi_2^{-1}) 
L(s+1, \tchi_1^{-1} \tchi_2) 
\end{array} 
& 
0
\\ 
\hline

\mbox{(V)} 
& 
\eqref{4gl1} \, 
\ts=\St_{\GL_4} \otimes \tchi
& 
\begin{array}{l}
L(s+1) L(s+2) L(s+3) 
\end{array}
& 
0
\\ 
\hline
\mbox{(V)} & 
\eqref{2gl2} \, 
\tsigma=\Delta[\nu^{1/2}, \nu^{-1/2}], \ttau \in \Irr_{\rm sc}(\GL_2) 
&  
\begin{array}{l}
L(s, \ttau, \Ad) L(s, \ttau \times \ttau^\vee)
\end{array}
& 
0
\\ 
\hline

\mbox{(A)} 
& 
\eqref{nongenericA1} \,  
Q\left( [ \nu^{1/2} \tchi ], [ \nu^{-1/2} \tchi ], [\tchi_3], [\tchi_4] \right) 
&  
\begin{array}{l} 
L(s-1) 
L(s)^3 
L(s+1) 
L(s, \tchi_3 \tchi_4^{-1}) L(s, \tchi_3^{-1 }\tchi_4)  
\\
\prod\limits_{i=3,4} 
\left( 
\begin{array}{l}
L(s+\frac{1}{2}, \tchi\tchi_i^{-1})  
L(s-\frac{1}{2}, \tchi^{-1}\tchi_i) 
\\ 
L(s-\frac{1}{2}, \tchi\tchi_i^{-1}) 
L(s+\frac{1}{2}, \tchi^{-1}\tchi_i) 
\end{array} 
\right) 
\end{array} 
& 
\geq 1
\\ 
\hline

\mbox{(A)}
&  
\eqref{nongenericA2} \, 
Q\left( [\nu \tchi], [\tchi], [\nu^{-1} \tchi], [\tchi_4] \right) 
&   
\begin{array}{l}
L(s-2) 
L(s-1)^2 
L(s)^3 
L(s+1)^2 
L(s+2) 
\\ 
\prod\limits_{t=-1,0,1} 
\left( 
L(s+t, \tchi\tchi_4^{-1})  L(s+t, \tchi^{-1}\tchi_4) 
\right)
\end{array} 
& 
\geq 2
\\ 
\hline
\mbox{(A)} 
&  
\eqref{nongenericA3} \, 
Q\left( [\tchi, \nu \tchi ], [\nu^{-1} \tchi ], [\tchi_4] \right) 
&   
\begin{array}{l} 
L(s-2) 
L(s-1)^2 
L(s)^2 
\\ 
\prod\limits_{t=-1,0} L(s+t, \tchi \tchi_4^{-1}) 
\prod\limits_{t=-1,1} L(s+t, \tchi^{-1} \tchi_4)
\end{array} 
& 
\geq 2
\\ 
\hline
\mbox{(A)}
&  
\eqref{nongenericA4} \, 
Q\left( [\nu \tchi], [ \tchi, \nu^{-1} \tchi], [\tchi_4] \right)  
&   
\begin{array}{l} 
L(s-1) 
L(s)^2 
L(s+1) 
L(s+2) 
\\ 
\prod\limits_{t=0,1} L(s+t, \tchi \tchi_4^{-1}) 
\prod\limits_{t=-1,1} L(s+t, \tchi^{-1} \tchi_4)
\end{array} 
& 
\geq 1
\\ 
\hline
\mbox{(A)}
&  
\eqref{nongenericA5} \, 
Q\left( [\nu^{3/2}\tchi], [\nu^{1/2}\tchi], [\nu^{-1/2} \tchi ], [\nu^{-3/2} \tchi] \right) 
&   
\begin{array}{l}
L(s-3) L(s-2)^2 L(s-1)^3 L(s)^3 
\\
L(s+1)^3 L(s+2)^2 L(s+3) 
\end{array} 
& 
3
\\ 
\hline

\mbox{(A)}
&   
\eqref{nongenericA6} \, 
Q\left( [\nu^{1/2} \tchi, \nu^{3/2} \tchi], [\nu^{-1/2} \tchi], [\nu^{-3/2} \tchi] \right) 
&    
L(s-3)
L(s-2)  
L(s-1)^2 
L(s)^2 
L(s+1)^2 
L(s+2)  
&  
2
\\ 
\hline

\mbox{(A)}
&    
\eqref{nongenericA7} \, 
Q\left( [\nu^{3/2} \tchi], [\nu^{-1/2} \tchi, \nu^{1/2} \tchi], [\nu^{-3/2} \tchi] \right) 
&    
L(s-3) L(s-2) L(s-1)^2 L(s)^2 L(s+1)^2 L(s+2) 
& 
2
\\ 
\hline

\mbox{(A)}
&   
\eqref{nongenericA8} \, 
Q\left( [\nu^{3/2} \tchi], [\nu^{1/2} \tchi], [\nu^{-3/2} \tchi, \nu^{-1/2} \tchi] \right) 
&    
L(s-3) 
L(s-2) 
L(s-1)^2 
L(s)^2 
L(s+1)^2 
L(s+2) 
& 
2
\\ 
\hline

\mbox{(A)}
&   
\eqref{nongenericA9} \, 
Q\left( [\nu^{1/2} \tchi, \nu^{3/2} \tchi], [\nu^{-3/2} \tchi, \nu^{-1/2} \tchi] \right) 
&    
L(s-3) L(s-2) L(s-1)^2 L(s) L(s+1) L(s+2) 
& 
2
\\ 
\hline

\mbox{(A)}
&   
\eqref{nongenericA10} \, 
Q\left( [\nu^{-1/2} \tchi, \nu^{1/2} \tchi, \nu^{3/2} \tchi], [\nu^{-3/2} \tchi] \right) 
&     
L(s-3) L(s-2) L(s-1) L(s) L(s+1) 
& 
1
\\ 
\hline

\mbox{(A)}
&   
\eqref{nongenericA11} \, 
Q\left( [\nu^{3/2} \tchi ], [\nu^{-3/2} \tchi, \nu^{-1/2} \tchi, \nu^{1/2} \tchi] \right)
&     
L(s-3) L(s-2) L(s-1) L(s) L(s+1) 
& 
1
\\ 
\hline

\mbox{(B)}
&   
\eqref{nongenericB}  
\begin{array}{c}
Q\left( [i_B^{\GL_2}(\teta_1 \boxtimes \teta_2)], [\tchi \nu^{1/2}], [\tchi \nu^{-1/2}] \right)\!, \\ 
\teta_1\teta_2^{-1} \neq \nu^{\pm1}  
\end{array}  
&    
\begin{array}{l}
L(s-1) L(s)^3 L(s+1) 
L(s, \teta_1\teta_2^{-1}) L(s, \teta_1^{-1}\teta_2) \\ 
\prod\limits_{t=\pm \frac{1}{2}}
\prod\limits_{i=1,2} 
\left( 
L(s+t, \teta_i\tchi^{-1}) 
L(s+t, \teta_i^{-1}\tchi) 
\right) 
\end{array} 
& 
\geq 1
\\ 
\hline

\mbox{(B)}
&
\eqref{nongenericB} \, 
\mbox{ (others covered in (A)) } 
&\multicolumn{2}{|c|}{
  }  
\\ 
\hline

\mbox{(C)}
&
\eqref{ps gl4 III non} \, 
\mbox{ (covered in (A) and (B)) } 
&\multicolumn{2}{|c|}{
  }  
\\ 
\hline

\mbox{(D)}
&   
\eqref{ps gspin6 IV non} \, 
\mbox{ with } \tsigma \in \Irr_{\rm sc}(\GL_2) 
&    
\begin{array}{l}
L(s-1) 
L(s)^2 
L(s+1) 
L(s, \sigma, \Ad) 
\\
\prod\limits_{t=\pm\frac{1}{2}}
\left( 
L(s+t, \sigma \times \chi^{-1}) 
L(s+t, \sigma^\vee \times \chi) 
\right) \\ 
\end{array} 
& 
1
\\ 
\hline

\mbox{(D)}
&   
\eqref{ps gspin6 IV non} \, 
\mbox{ with } 
\tsigma = \St_{\GL_2} \otimes \eta  
&    
\begin{array}{l} 
L(s-1) L(s)^2 L(s+1)^2 \\
L(s, \chi\eta^{-1}) L(s+1, \chi\eta^{-1}) L(s+1, \chi^{-1}\eta) L(s, \chi^{-1}\eta)
\end{array} 
& 
\geq 1
\\ 
\hline

\mbox{(D)}
&
\eqref{ps gspin6 IV non} \, 
\mbox{ (others covered in (A)) } 
&\multicolumn{2}{|l|}{
}  
\\ 
\hline

\mbox{(E)}
&
\eqref{ps gspin6 E non} \, 
\mbox{ (covered in (A)) }
& 
\multicolumn{2}{|l|}{
  }  
\\ 
\hline

\end{array} 
$$
\end{table}

\end{document}